\documentclass[11pt]{amsart}

\usepackage[dvipsnames]{xcolor}
\usepackage{enumitem}
\usepackage{amssymb,graphicx,mathtools}
\usepackage{float}
\usepackage[margin=1in]{geometry}
\usepackage{etoolbox}

\usepackage[square,longnamesfirst]{natbib}

\theoremstyle{plain}
\newtheorem{theorem}{Theorem}[section]
\newtheorem{proposition}[theorem]{Proposition}

\newtheorem{lemma}[theorem]{Lemma}
\theoremstyle{definition}
\newtheorem{problem}{Problem}

\usepackage[
citecolor=PineGreen,
colorlinks=true,
linkcolor=RoyalBlue,
urlcolor=BrickRed
]{hyperref}

\newcommand{\Z}{\mathbb Z}
\newcommand{\R}{\mathbb R}
\renewcommand{\P}{\mathbb P}
\newcommand{\E}{\mathbb E}
\newcommand{\one}{\mathbf 1}
\newcommand{\dist}{\operatorname{dist}}
\newcommand{\Reff}{\mathcal R}
\newcommand{\defn}[1]{\textbf{#1}}

\makeatletter
\patchcmd{\@settitle}{\uppercasenonmath\@title}{\Large}{}{}
\patchcmd{\@setauthors}{\MakeUppercase}{\large}{}{}
\makeatother

\title[ORRW has range exponent $\geq d/(d+1)$]{Once-reinforced random walk on $\Z^d$ \\ has range exponent at least $d/(d+1)$}
\author{Ahmed Bou-Rabee}
\author{Yuval Peres}

\begin{document}

	\begin{abstract}
		In once-reinforced random walk, introduced by \citet{Davis1990}, every edge starts with weight one and is assigned weight $\beta\geq1$ after its first crossing.  At each step the walker chooses an incident edge with probability proportional to its weight. For sufficiently large reinforcement $\beta$, the expected range in the first $n$ steps was predicted by \cite{OrdemannBerkolaikoHavlinBunde2000} and independently by \cite{Beffara2011} to have order $n^{\frac{d}{d+1}}$ on $\Z^d$.  We prove the corresponding lower bound in every dimension $d\geq2$ and for every reinforcement parameter $\beta$.
	\end{abstract}

	\maketitle
	\begin{figure}[!b]
		\centering
		\includegraphics[width=\textwidth]{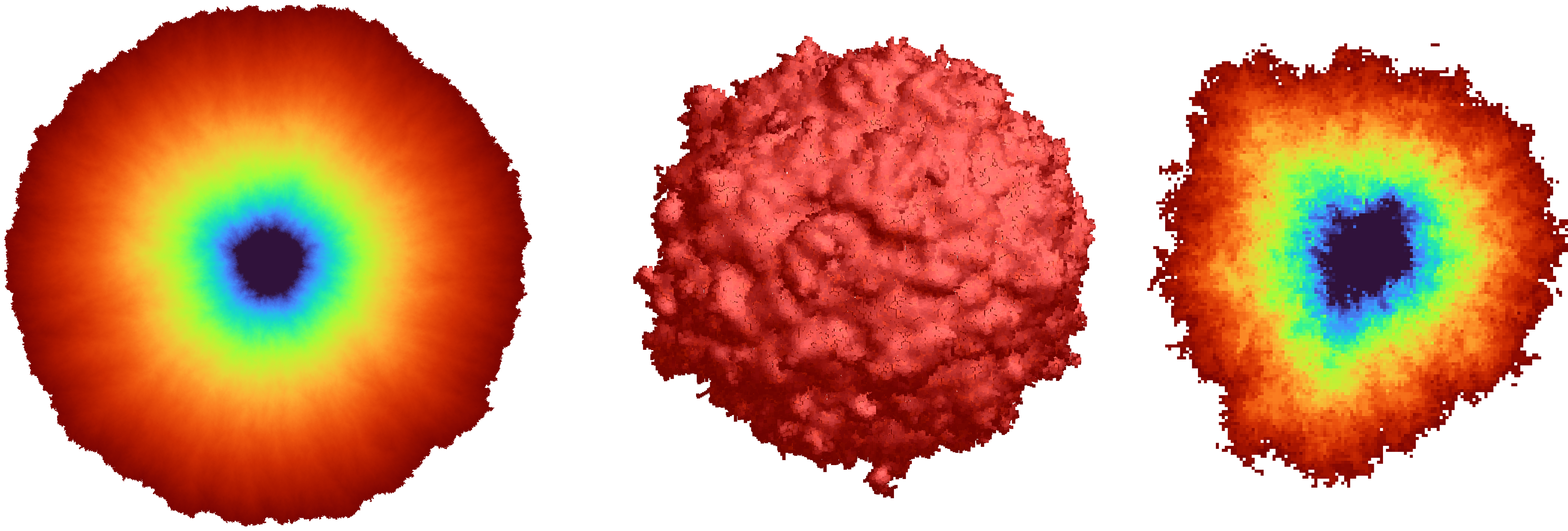}
		\caption{The range of once-reinforced random walk with $\beta=50$: on
		$\Z^2$ after $3 \times10^{10}$ steps, left; on $\Z^3$ after
		$10^9$ steps, drawn as a solid, middle; and a slice
		through the origin, right.  The color ranges on a logarithmic scale from dark blue at the sites visited first, through green and orange, to dark red at those visited last.}
		\label{fig:range}
	\end{figure}

	\section{Introduction}\label{sec:introduction}

	In \defn{once-reinforced random walk} on $\Z^d$, every edge has weight one
	until the walk first crosses it and weight $\beta\geq1$ from then on. At
	each step the walker moves to one of the $2d$ neighbors  with probability proportional to the weight of the edge it crosses.  For $\beta>1$, the walk favors previously crossed edges. At $\beta=1$, it is simple random walk.

	\citet{Davis1990} introduced the model on $\Z$ and proved that it returns to the origin infinitely often.  In higher dimensions very little is known.  For $2\leq d\leq5$ no value of
	$\beta>1$ is known to give either recurrence or transience; and for $d\geq6$
	transience is known only when $\beta$ is close to one
	\citep[Theorem~1]{ElboimKozma2026}.

	For sufficiently large $\beta$, depending on $d$, the expected number of
	visited sites is predicted to have order $n^{\frac{d}{d+1}}$.
	This prediction first appeared in
	\citet*{OrdemannBerkolaikoHavlinBunde2000} for site reinforcement.
	\citet{FosterGrassbergerPaczuski2009} gave numerical and heuristic
	evidence for the same range exponent under site and edge reinforcement.
	In dimension two, the exponent is $2/3$. This growth was also predicted for
	rotor walk on $\Z^2$ with independent uniform initial rotors by
	\citet*{PriezzhevDharDharKrishnamurthy1996}, and this growth was recently proved by
	\citet[Theorem~1.1]{BouRabeePeres2026}.
	In this paper, we prove the corresponding lower bound for once-reinforced random walk: after $n$ steps, the expected number of visited sites is at least $C^{-1}(n/\beta)^{\frac{d}{d+1}}$, uniformly in
	$\beta\geq1$.

	\subsection{Main results}\label{ssec:main-results}

	Fix an integer $d\geq2$ and a reinforcement parameter $\beta\geq1$.  Let
	$(X_n)_{n\geq0}$ be once-reinforced random walk started at the origin of
	$\Z^d$, and let $E_n\coloneqq\{\{X_{i-1},X_i\}:1\leq i\leq n\}$ be the set of
	undirected edges it has crossed in its first $n$ steps, so that
	$E_0=\varnothing$. Given $X_0,\ldots,X_n$, the walk steps from $X_n$ to a neighbor $y$ with
	probability proportional to $\beta$ if $\{X_n,y\}\in E_n$ and proportional
	to $1$ otherwise. We write $\P_\beta$ and
	$\E_\beta$ for the law of the walk and expectation with respect to that law. We call an edge \defn{traversed} once the walk has crossed it, and
	\defn{untraversed} before that.

	The \defn{range} at time $n$ is $R_n\coloneqq\{X_0,\ldots,X_n\}$.

	\begin{theorem}[Range lower bound]\label{thm:main} Let
		$d\geq2$. There is a constant $C\geq1$, depending only on $d$, such that
		for every $\beta\geq1$,
		\begin{equation}\label{eq:main-range}
			\E_\beta|R_n|\geq C^{-1}\beta^{-\frac{d}{d+1}}n^{\frac{d}{d+1}}
			\qquad\text{for every }n\geq0\, .
		\end{equation}
	\end{theorem}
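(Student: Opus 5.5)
We will prove the bound through an electrical-network estimate, using a Lyapunov-type quantity that grows at a controlled rate. The heuristic is as follows. If the range has size $r$, the traversed subgraph $E_n$ has at most $2dr$ edges. The walk confined to a cluster of $r$ sites has "diameter" at least $r^{1/d}$. It spends time of order (number of edges)$\times$(effective resistance) between visits to fresh boundary edges. Each boundary exit has probability comparable to $1/\beta$ relative to traversed edges. So each new site costs about $\beta\, r^{1/d}$ steps, and summing over $r\le R$ gives $n\lesssim \beta R^{(d+1)/d}$, which is the claimed bound after inversion. The plan is to make this rigorous by bounding the expected time between discoveries of new edges.

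\textbf{Step 1 (time between discoveries).} Let $\tau_k$ be the time the $k$-th distinct edge is first crossed. Between $\tau_k$ and $\tau_{k+1}$ the walk is a random walk on the fixed weighted graph $E_{\tau_k}$, each edge having weight $\beta$. At each vertex $x$ it is also killed (i.e.\ discovers a new edge) with rate proportional to the number $u(x)$ of untraversed incident edges, each of weight $1$. This is a network with conductance $\beta$ on $G_k=E_{\tau_k}$ and a unit conductance from each boundary half-edge to a cemetery $\Delta$. By the commute-time identity, the expected hitting time of $\Delta$ from the current position $x$ is at most
\[
\mathcal{C}\cdot \Reff(x\leftrightarrow \Delta), \qquad \mathcal{C}=2\beta|E_{\tau_k}|+2\sum_x u(x).
\]
Here $\mathcal{C}$ is the total conductance.

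\textbf{Step 2 (resistance bound).} We need $\Reff(x\leftrightarrow\Delta)\le C/\beta\cdot k^{1/d}+C$. This is the step I expect to be the main obstacle. The graph $G_k$ is a connected subgraph of $\Z^d$ with $k$ edges, and $x\in G_k$. I would take a shortest path inside $G_k$ from $x$ to a boundary vertex, one having an untraversed incident edge. The resistance along that path is (length)$/\beta$, plus $1$ for the final unit conductor. The difficulty is that the path length inside $G_k$ could be as large as $k$ (consider a long corridor), not $k^{1/d}$. So the worst-case bound only gives $k/\beta$.

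To fix this I would not bound each gap deterministically. Instead I would compare expectations using a discrete isoperimetric input. For a set of $r$ sites in $\Z^d$, the number of untraversed boundary edges is at least $c\,r^{(d-1)/d}$. Combined with the Nash–Williams/Thomson principle this gives a flow from $x$ to the boundary whose energy is $\lesssim (r^{1/d}+1)/\beta+r^{-(d-1)/d}$ on average over starting points. Concretely, I would spread unit flow along many disjoint-ish paths toward the boundary. Averaging $x$ against the stationary measure of the killed chain gives an averaged resistance bound of order $\beta^{-1}r^{1/d}+1$. It is exactly this averaging that only yields bounds in expectation, matching the statement.

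\textbf{Step 3 (summation).} Combining the two steps, $\E[\tau_{k+1}-\tau_k]\le C(\beta k)(\beta^{-1}k^{1/d}+1)/(\text{boundary factor})$. After normalizing correctly this should give $\E[\tau_{k+1}-\tau_k]\le C\beta k^{1/d}$. Summing, $\E\tau_K\le C\beta K^{(d+1)/d}$. Set $K=c(n/\beta)^{d/(d+1)}$. Markov's inequality gives $\P(\tau_K>n)\le 1/2$, so with probability at least $1/2$ at least $K$ edges are traversed by time $n$. Since $|E_n|\le 2d|R_n|$, we get $\E|R_n|\ge K/(4d)$. This proves \eqref{eq:main-range}. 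Small $n$, meaning $n<\beta$, are trivial because $|R_n|\ge1$.
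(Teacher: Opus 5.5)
Your outer reduction (prove $\E\sigma_k\leq C\beta k^{1+1/d}$, then use Markov's inequality and $|E_n|\leq C|R_n|$) is the paper's final step. The core estimate, however, is never established, and the route you propose cannot establish it.

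The fatal step is Step~1, not Step~2. The commute-time identity also charges you for the return trip from $\Delta$ to $x$, and that term is never small. The total conductance at $x$ is at most $2d\beta$, so $\Reff(x\leftrightarrow\Delta)\geq1/(2d\beta)$, while $\mathcal C\geq2\beta k$. Hence $\mathcal C\,\Reff(x\leftrightarrow\Delta)\geq k/d$ for \emph{every} configuration. Summing over gaps can therefore never give better than $\E\tau_K\lesssim K^2$, i.e.\ a range exponent $1/2<d/(d+1)$. The division by a ``boundary factor'' in Step~3 is exactly where this loss is swept away, and nothing justifies it.

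Conversely, the resistance bound you flagged as the main obstacle holds deterministically. In a corridor every site has an untraversed edge. If every edge at $x$ is traversed and $\rho$ is the distance from $x$ to the nearest site with an untraversed edge, then the $\ell^1$-ball of radius $\rho-1$ about $x$ consists of fully surrounded sites. There are at most $k/d$ such sites, so $\rho\leq Ck^{1/d}$, and a shortest lattice path gives $\Reff(x\leftrightarrow\Delta)\leq\rho/\beta+1$. This is the same shortest-path Thomson bound as \eqref{eq:thomson}. Plugged into Step~1, it yields only $C(k^{1+1/d}+\beta k)$ per gap.

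More fundamentally, no per-gap bound valid for every configuration can succeed. Right after a discovery the walker stands at the arrival site $w_k$, which may lie deep inside a fully traversed region, since the new edge can close a hole. Take the walker at the center of a fully traversed box of radius $r$, with $k\asymp r^d$. The walk is simple random walk until it leaves the box, so the next discovery takes order $r^2=k^{2/d}$ steps, which exceeds $\beta k^{1/d}$ when $\beta\ll k^{1/d}$. Summing uniform per-gap bounds therefore cannot beat $K^{1+2/d}$, which gives only the earlier exponent $d/(d+2)$. The ``stationary averaging'' of Step~2 is not available either, because $w_k$ is not distributed according to any stationary measure of the killed chain.

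The missing idea is amortization across discoveries:
\begin{enumerate}[label=\textup{(\roman*)}]
\item The paper introduces the potential $\Phi_n=2du_{S_n}(X_n)$, the expected time for simple random walk to leave the set $S_n$ of fully surrounded sites.
\item It shows that $n+\Phi_n-\Gamma_n$ is a supermartingale, where $\Gamma$ jumps only at first crossings, by $2d\beta h_{S_{\sigma_{j-1}}}(v_j)+2du_{S_{\sigma_j}}(w_j)$ (Lemma~\ref{lem:martingale}).
\item It bounds the total of these jumps deterministically: for every ordering of $M$ sites, $\sum_i\bigl(h_{A_{i-1}}(x_i)+u_{A_i}(x_i)\bigr)\leq CM^{1+1/d}$ (Lemma~\ref{lem:insertion}). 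This uses the identities $T(A^+)-T(A)=h_A(x)u_{A^+}(x)$ and $u_{A^+}(x)=\Reff\bigl(x\leftrightarrow(A^+)^c\bigr)h_A(x)$, the Faber--Krahn bound $T(A)\leq C|A|^{1+2/d}$, the packing bound $\sum_i\Reff(x_i\leftrightarrow A_i^c)\leq CM$, and Cauchy--Schwarz.
\item Lemma~\ref{lem:walk-order} embeds the random sets $S_{\sigma_j}$ into a single increasing sequence, so the deterministic bound applies.
\end{enumerate}
Individual terms can be of order $k^{2/d}$; only their sum is $O(k^{1+1/d})$, and a gap-by-gap scheme cannot capture that.
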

    In Proposition~\ref{prop:beta-sharp} below, we observe that the dependence
    on $\beta$ in \eqref{eq:main-range} is sharp. 

	For comparison, the expected number of sites visited by simple random walk
	on $\Z^d$ by time $n$ is asymptotic to $\pi n/\log n$ when $d=2$, and, when $d\geq3$, to a constant multiple of $n$ \citep[Theorem~1]{DvoretzkyErdos1951}.

    We also obtain a stretched-exponential lower-tail bound. 

	\begin{theorem}[Stretched-exponential tail]\label{thm:whp}
		Let $d\geq2$.  There is a constant $C\geq1$, depending only
		on $d$, such that for every $\beta\geq1$ and every $n$ with
		$n/\beta\geq C$,
		\[
		\P_\beta\{|R_n|<C^{-1}\beta^{-\frac{d}{d+1}}n^{\frac{d}{d+1}}\}
		\leq\exp\bigl(-C^{-1}(n/\beta)^{(d-1)/(d+1)}\bigr)\, .
		\]
	\end{theorem}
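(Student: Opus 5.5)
The plan is to compare two quantities. One is the number of \emph{fresh crossings}, meaning steps along an untraversed edge. The other is the number of \emph{exposed times}, meaning times $i<n$ at which $X_i$ has at least one incident untraversed edge. Set $m\coloneqq c_0(n/\beta)^{d/(d+1)}$ with $c_0$ small, and work on the event $\{|R_n|<m\}$.

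First, the fresh crossings are capped by the range. Every crossed edge has both endpoints in $R_n$, so at most $d|R_n|\leq dm$ distinct edges are ever traversed. Hence there are at most $dm$ fresh crossings.

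Second, the fresh crossings dominate a binomial. At an exposed time the walker is at a vertex with at most $2d$ incident edges, each of weight at most $\beta$, and at least one of weight $1$. So it takes a fresh edge with conditional probability at least $p\coloneqq 1/(2d\beta)$, given the past. Attach to the $k$-th exposed time an independent uniform variable $U_k$, and couple the step so that $U_k\leq p$ forces a fresh crossing. Then on $\{N\geq L\}$, where $N$ is the number of exposed times before $n$, there are at least $\#\{k\leq L: U_k\leq p\}$ fresh crossings. A Chernoff bound gives
\[
\P_\beta\{|R_n|<m,\ N\geq L\}\leq \P\{\mathrm{Bin}(L,p)\leq dm\}\leq e^{-cLp}
\qquad\text{as soon as } Lp\geq 2dm.
\]
The target exponent is $(n/\beta)^{(d-1)/(d+1)}\asymp m^{(d-1)/d}$. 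So it suffices to take $L\asymp \beta m^{(d-1)/d}$ times a large constant, for which $Lp\gtrsim m^{(d-1)/d}\geq dm$ fails. The bound must therefore come from a larger $L$. The heuristic is $N\approx n\,m^{-1/d}$, and with this value
\[
Lp\asymp \frac{n\,m^{-1/d}}{\beta}\asymp c_0^{-(d+1)/d}\, m,
\]
which beats $dm$ once $c_0$ is small. This yields a bound $e^{-cm}$, even better than needed. It then remains to show
\[
\P_\beta\{|R_n|<m,\ N< c\,n\,m^{-1/d}\}\leq \exp(-c\,m^{(d-1)/d}).
\]

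The main obstacle is this last estimate: a confined walk must spend a fraction at least $\asymp m^{-1/d}$ of its time at exposed vertices. Call a visited vertex \emph{interior} if all $2d$ of its edges are traversed. At interior vertices all weights equal $\beta$, so the walk there is exactly simple random walk. Thus between exposed times the walk performs simple random walk excursions inside the interior set, which has fewer than $m$ sites. A naive Faber--Krahn exit-time bound controls each excursion only by $m^{2/d}$, which gives the weaker exponent $d/(d+2)$. That is not enough.

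Instead I would use an occupation-measure argument, treating exposed vertices like boundary in an isoperimetric inequality. Consider the reversible chain with conductances $\beta$ on the traversed graph $G_t$; it agrees with the walk at interior vertices. Split $[0,n]$ into blocks of length $T\asymp m^{2/d}$, roughly the relaxation time of simple random walk on a set of $m$ sites. In each block the current visited set $A$ has $|A|<m$. By the discrete isoperimetric inequality it has at least $c|A|^{(d-1)/d}$ vertices incident to an edge leaving $A$, and each of these is exposed. In its stationary (degree-weighted) measure these exposed vertices therefore carry mass at least $c\,m^{-1/d}$.

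I would then aim to show that each block contains at least $c\,T m^{-1/d}$ exposed times with probability bounded below, uniformly in the past. Conditionally on the past, the block counts then dominate independent Bernoulli variables, and Freedman's or Azuma's inequality over the $n/T$ blocks gives the bound. Counting the blocks, $n/T\asymp n\,m^{-2/d}$, which is $\gtrsim m^{(d-1)/d}$ precisely when $n\gtrsim m^{(d+1)/d}$. This matches our choice of $m$ up to $\beta$ factors, which is reassuring about the exponent.

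If a per-block ergodic bound proves awkward, because the graph $G_t$ changes and is not assumed to mix, I would first try a martingale route. Use a potential $f$ (for instance a coordinate or $|x|^2$ truncated to $A$), whose increments are martingale-like at interior vertices and have bounded drift at exposed ones. Then run a Gauss--Green identity on $A$ to convert confinement into a lower bound on the boundary local time. I would reuse whatever estimate proves Theorem~\ref{thm:main}, upgraded to blockwise concentration; finally, the condition $n/\beta\geq C$ guarantees that $m$ and the number of blocks are large.
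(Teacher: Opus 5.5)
\textbf{There is a genuine gap: the proposal reduces the theorem to a lower bound on exposed times but does not prove that bound, and the suggested routes do not deliver it.}

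Your reduction itself is sound. On $\{|R_n|<m\}$ there are at most $dm$ first crossings, and each exposed time produces one with conditional probability at least $1/(2d\beta)$. So the binomial coupling disposes of $\{|R_n|<m,\ N\geq c\,n\,m^{-1/d}\}$. Everything then rests on one claim: a walk with range below $m$ has at least $c\,n\,m^{-1/d}$ exposed times, except with probability $\exp(-c\,m^{(d-1)/d})$. That claim is essentially the whole content of the theorem.

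\emph{The block argument.} You assume that each block of $T\asymp m^{2/d}$ steps contains $\gtrsim Tm^{-1/d}$ exposed times with probability bounded below, uniformly in the past. Your only support is the isoperimetric bound on the stationary mass of exposed sites. That says nothing about a block started at an arbitrary site of an arbitrary, evolving traversed graph: there is no uniform mixing bound at time scale $m^{2/d}$, since the reflected walk on a connected set of $m$ sites can have relaxation time of order $m^2$.

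\emph{The fraction $m^{-1/d}$ is not local.} After a visit to an exposed site, the expected wait until the next exposed time can be of order $m^{2/d}$. For example, when $d\geq3$, let the site be an endpoint of a single untraversed edge deep inside a fully traversed ball of volume of order $m$. By transience the walk typically wanders for time of order $m^{2/d}$ before meeting another exposed site. Bounding every such wait by the Faber--Krahn bound $Cm^{2/d}$ is exactly what gives $d/(d+2)$.

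\emph{The martingale fallback.} Running $|X_t|^2$ gives roughly $n\leq D^2+2DN$, where $D$ is the maximal displacement. But $\{|R_n|<m\}$ allows $D$ of order $m$ (a tube-shaped range), which yields only $N\gtrsim n/m$.

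\emph{What the paper does instead.} The missing idea is amortization over the growth of the traversed set: expensive exposed sites must be paid for by the edges eventually added there. The paper never counts exposed times. It uses the potential $\Phi_n=2du_{S_n}(X_n)$. At each first crossing it adds a charge $2d\beta h_{S_n}(X_n)$ plus the new potential, so that $n+\Phi_n-\Gamma_n$ is a supermartingale; the factor $\beta$ pays for the waiting at exposed sites. The bound $\Gamma_n\leq C\beta|E_n|^{1+1/d}$ comes from Lemma~\ref{lem:insertion}, which combines three ingredients:
\begin{itemize}
\item the identity $T(A^+)-T(A)=h_A(x)u_{A^+}(x)$;
\item the packing bound $\sum_i\Reff(x_i\leftrightarrow A_i^c)\leq CM$;
\item Cauchy--Schwarz.
\end{itemize}
This holds for every ordering of sites, so no shape or mixing input is needed. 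The tail bound then comes from the exponential supermartingale of Lemma~\ref{lem:exp}. Its exponent $k^{(d-1)/d}$ is the ratio of the total charge $C\beta k^{1+1/d}$ to the largest single increment $C\beta k^{2/d}$. That ratio plays the role of your block count $n/T$.
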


	The right-hand side is summable in $n$ for each fixed $\beta$, so the
	Borel--Cantelli lemma gives the almost sure bound
	$\liminf_{n\to\infty}|R_n|n^{-\frac{d}{d+1}}\geq C^{-1}\beta^{-\frac{d}{d+1}}$.

	We also bound the expected time spent at a fixed site. Write
	$L_n(z)\coloneqq\#\{0\leq j<n:X_j=z\}$ for the local time at $z$ before time $n$.

	\begin{theorem}[Occupation measure]\label{thm:occupation-intro}
		Let $d\geq2$.  There is a constant $C<\infty$, depending only on $d$, such
		that for every $\beta\geq1$ and every $n\geq1$,
		\begin{equation}\label{eq:occupation-intro}
			\sup_{z\in\Z^d}\E_\beta L_n(z)\leq C\beta n^{1/(d+1)}\, .
		\end{equation}
	\end{theorem}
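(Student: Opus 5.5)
The plan is to exploit the fact that the environment changes only when a new edge is crossed. Between two consecutive ``discovery'' times the walk is a time-homogeneous reversible Markov chain with conductances $\beta$ on $E_t$ and $1$ elsewhere. Within each such stretch I would compare visits to $z$ with visits to the discovery set. I would then sum over stretches, using isoperimetry in $\Z^d$ to relate the cost per discovery to the number of discovered edges. The heuristic target is the following: when $|E_t|\approx k$, a stretch lasts about $\beta k^{1/d}$ steps and spends a fraction about $1/k$ of its time at $z$. Summing $\beta k^{1/d}/k$ over $k\le K$, with $\beta K^{1+1/d}\approx n$, gives $\beta K^{1/d}\approx\beta^{d/(d+1)}n^{1/(d+1)}\le\beta n^{1/(d+1)}$.

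Step 1 (one stretch). Let $\tau_k$ be the time of the $k$-th crossing of an untraversed edge. Let $A_k$ be the set of vertices incident to at least one untraversed edge at time $\tau_k$, and let $G_k$ be the network with conductances $\beta$ on $E_{\tau_k}$ and $1$ elsewhere. On $[\tau_k,\tau_{k+1})$ the walk is the $G_k$-random walk. Each visit to $A_k$ discovers a new edge with probability at least $1/(2d\beta)$, so the expected number of visits to $A_k$ in the stretch is at most $2d\beta$.

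For the visits to $z$, I would use the standard reversible-chain identity: the expected number of visits to $z$ between successive visits to $A_k$ is governed by the ratio of stationary masses, i.e.\ $\pi(z)/\pi(A_k)$ for the chain watched on the finite traversed cluster together with $A_k$. Here $\pi(z)\le 2d\beta$ and $\pi(A_k)\ge|A_k|$. Rather than the ratio itself, I would use the cleaner bound that the number of visits to $z$ before hitting $A_k$ has mean $c(z)\,\Reff(z\leftrightarrow A_k)$. The task is then to bound this resistance in terms of $|E_{\tau_k}|$.

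Step 2 (geometry). Let $r_k=\dist(z,A_k)$, so that the ball $B(z,r_k)$ is entirely traversed. This forces $r_k\le Ck^{1/d}$, and $\Reff(z\leftrightarrow A_k)$ is at most $\beta^{-1}$ times a unit-resistance quantity across a region of size at least $r_k$. Starting from $A_k$, the walk must travel distance $r_k$ through conductance-$\beta$ edges to reach $z$. I would combine the hitting estimate $\P_{A_k}(\text{hit } z\text{ before returning to }A_k)\lesssim \pi(z)/\pi(A_k)$ with the isoperimetric bound $|A_k|\ge c\,k^{(d-1)/d}$, which holds because $A_k$ contains the vertex boundary of the traversed cluster. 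The goal is the per-stretch bound
\[
\E\bigl[L_{\tau_{k+1}}(z)-L_{\tau_k}(z)\,\big|\,\mathcal F_{\tau_k}\bigr]\le C\beta\,k^{-(d-1)/d}\cdot\min(k^{1/d},\cdot)\, .
\]
I will take the target to be $C\beta k^{1/d}/k\cdot k^{(d-1)/d}\cdot k^{-(d-1)/d}$, that is, $C\beta\, k^{1/d-1}\cdot(\text{stretch length}/ (\beta k^{1/d}))$. Equivalently, the fraction of time spent at $z$ in a stretch is $O(1/|A_k|\cdot \text{const})$, which is what reversibility gives once one returns to $A_k$.

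Step 3 (summation). Let $N_n$ be the number of discovered edges by time $n$. Summing Step 2 over $k\le N_n$ gives $\E L_n(z)\le C\beta\,\E N_n^{1/d}$ up to constants. Separately, the same isoperimetric bound, applied in reverse, shows that time $n$ forces $N_n$ to be at most of order $(n/\beta)^{d/(d+1)}$ in the relevant sense. More precisely, I would write $L_n(z)\le\sum_k (\text{visits in stretch }k)$ and cut the sum at $K=(n/\beta)^{d/(d+1)}$. For the stretches with $k\le K$ I would use the per-stretch bound. For the stretches with $k>K$ I would use that the fraction of time at $z$ is at most $C\beta/K$ over at most $n$ steps. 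Both pieces are $O(\beta n^{1/(d+1)})$.

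The main obstacle is Step 2. Turning the heuristic ``time fraction at $z$ is about $1/k$'' into a rigorous per-stretch bound requires controlling a chain that is reversible only within a stretch, on an infinite network whose traversed cluster need not be a ball. I would first try the Green function of the $G_k$-walk killed on $A_k$, together with the identity $\E_{\pi|_{A_k}}(\text{visits to }z)=\pi(z)\,\P_z(\text{escape})^{-1}\cdots$. Here the cost of one stretch is bounded by $2d\beta\cdot\max_{a\in A_k}G_{A_k}(a,z)$, which by symmetry of the Green function equals $2d\beta\cdot \pi(z)\,\max_a \P_z(\text{hit } a\text{ first})/\pi(a)$. This reduces everything to isoperimetry of the traversed cluster.
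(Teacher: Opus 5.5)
\textbf{There is a genuine gap in Step~2, and the bound it targets is false as you state it.} The display asks for a bound on $\E\bigl[L_{\tau_{k+1}}(z)-L_{\tau_k}(z)\mid\mathcal F_{\tau_k}\bigr]$ that decays in $k$. But conditionally on $\mathcal F_{\tau_k}$, the stretch starts at $w_k$, wherever the history has put it. Consider the event that $w_k\sim z$ and $\{w_k,z\}\in E_{\tau_k}$; it has positive probability for arbitrarily large $k$. On it, the walk steps to $z$ at time $\tau_k+1$ with probability at least $1/(2d)$, and that step is not a first crossing, so the left side is at least $1/(2d)$.

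Stationary-measure reasoning does not repair this. The ratio $\pi(z)/\pi(A_k)$, like the bound $\P_a(\text{hit } z \text{ before returning to } A_k)\lesssim\pi(z)/\pi(A_k)$, holds only on average over $a$ drawn from $\pi$ restricted to $A_k$. A stretch makes only $O(\beta)$ visits to $A_k$, at points dictated by the history, so it is nowhere near equilibrium. Your fallback $2d\beta\max_{a\in A_k}G_{A_k}(a,z)$ replaces that average by a maximum. The maximum is at least of order $\beta$ whenever $z\notin A_k$ has a neighbor in $A_k$, and this can persist through arbitrarily many stretches while the walk discovers edges far away. Isoperimetry controls only $|A_k|$, so it says nothing about such a maximum.

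The ``fraction of time at $z$'' bound you use for $k>K$ rests on the same unjustified equilibrium heuristic. As written it also overshoots: $C\beta n/K$ with $K=(n/\beta)^{d/(d+1)}$ equals $C\beta^{(2d+1)/(d+1)}n^{1/(d+1)}$, too large by a factor $\beta^{d/(d+1)}$. Finally, the claim in Step~3 that isoperimetry ``applied in reverse'' gives $N_n\lesssim(n/\beta)^{d/(d+1)}$ is an upper bound on the range. A large boundary speeds discovery up rather than slowing it down, and this matching upper bound is exactly the open Problem~\ref{prob:upper}.

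\textbf{The missing idea} is a way to add up the stretches without knowing where each one starts or what the traversed cluster looks like. The paper does this with a single potential. For bounded $f$ and $\Delta_jf=f(X_{j+1})-f(X_j)$, reweighting first crossings by $\beta$ gives
$\E_\beta[\Delta_jf+(\beta-1)\eta_{j+1}\Delta_jf\mid\mathcal F_j]=\frac{2d\beta}{\mu_j}(P-I)f(X_j)$,
with $2d\beta/\mu_j\in[1,2d]$ for $j\geq1$. Summed over time, the correction $(\beta-1)\sum_j\eta_{j+1}\Delta_jf$ runs over distinct edges. It is therefore bounded deterministically by $(\beta-1)\sum_{\{x,y\}}|f(y)-f(x)|$, whatever the shape of the cluster or the starting points of stretches.

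Take $f=-g_R(\cdot-z)$, the lazy Green function truncated at $R$ steps. Then $(P-I)(-g_R)=2(\one_{\{0\}}-Q^R(0,\cdot))$, and the total variation is at most $4d^{3/2}\sqrt R$ (Lemma~\ref{lem:green}). Telescoping shows that the expected number of visits to $z$ in any window of length $h$ is at most $C[hR^{-d/2}+\operatorname{osc}(g_R)+(\beta-1)\sqrt R]$. Choosing $R\approx(h/(\beta-1))^{2/(d+1)}$ gives the exponent $1/(d+1)$, with no equilibrium, isoperimetric, or range-upper-bound input.
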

    Section~\ref{sec:return} extends this bound to arbitrary deterministic
    time intervals and sharpens its dependence on $\beta$. 
    	
	\begin{figure}[t]
		\centering
		\includegraphics[width=\textwidth]{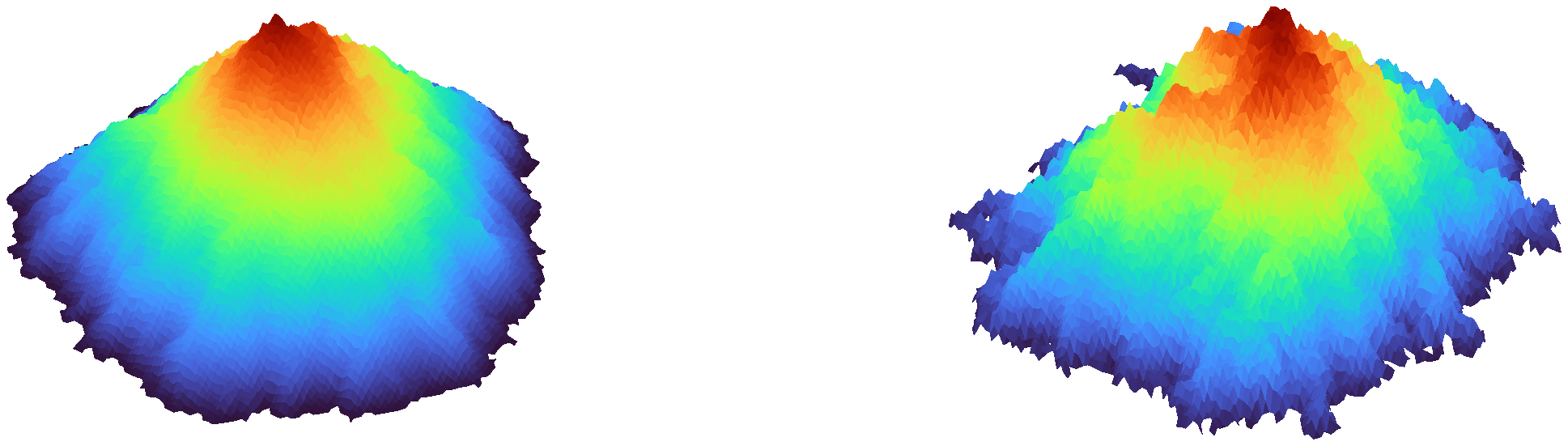}
		\caption{A topographical map of the local time of once-reinforced random walk with $\beta=50$,
		left; and of the clockwise rotor walk with independent uniform initial
		rotors, right. Both walks are on $\Z^2$. Each was run until it had visited
		$70\,000$ sites, taking about $2 \times10^8$ steps on the left and
		$10^6$ on the right.
		The color ranges from dark blue at sites visited once to dark red at the
		site visited most often, $11\,669$ times on the left and $64$ on the right.}
		\label{fig:rotor-comparison}
	\end{figure}
    
	At $d=2$, the bound \eqref{eq:occupation-intro} has order $n^{1/3}$.
	For the clockwise rotor walk on $\Z^2$ with independent uniform initial
	rotors, the range has order $n^{2/3}$ almost surely
	\citep[Theorem~1.1]{BouRabeePeres2026}, so the average local time over
	the visited sites has order $n^{1/3}$.

	\subsection{Proof overview}\label{ssec:proof-overview}

	The heuristic for the exponent $d/(d+1)$ assumes that the walk has
	visited a ball of radius $r$, crossed every edge inside it, and mixed inside
	the ball. The walk then spends a fraction of order $1/r$ of its time at sites
	with an untraversed incident edge. At each such site, the probability
	of crossing an untraversed edge has order $1/\beta$. Adding a site therefore
	takes about $\beta r$ steps. Growing the radius from $r$ to $r+1$ requires
	order $r^{d-1}$ new sites, hence order $\beta r^d$ steps. Reaching radius $r$
	takes order $\beta r^{d+1}$ steps, which predicts
	$r=(n/\beta)^{1/(d+1)}$ and a range of order $(n/\beta)^{\frac{d}{d+1}}$.
    See also the discussion in \citet{Beffara2011}. 
    Figure~\ref{fig:rotor-comparison} shows the time spent at each site in a simulation.

	To prove the range lower bound, we bound the time needed for the walk to cross $k$
	distinct edges. For each integer $k\geq1$, let $\sigma_k$ be the first such time,
	with $\sigma_0=0$.
	We prove
	\[
	\E_\beta\sigma_k\leq C\beta k^{1+1/d}\, ,
	\]
	where $C$ depends only on $d$. Markov's inequality gives the range lower
	bound by taking $k$ of order $(n/\beta)^{\frac{d}{d+1}}$.
	The proof requires no assumption on the shape of the visited set or
	on mixing inside it.

	\medskip
	\noindent\textbf{Step 1. Sum exit times over increasing sets.}
	For a finite set $A$ and $x\in A$, let $2du_A(x)$ be the expected exit time
	of simple random walk from $A$, started at $x$. Extend $u_A$ by zero off
	$A$ and let $u_\varnothing=0$. For $x\notin A$, let $h_A(x)$ be the
	expected time for simple random walk started at $x$ to first return to $x$ or
	leave $A\cup\{x\}$. For every ordering $x_1,\ldots,x_M$ of distinct lattice
	vertices, with $A_i=\{x_1,\ldots,x_i\}$ and $A_0=\varnothing$, we prove
	\begin{equation}\label{eq:overview-insertion}
	\sum_{i=1}^M\bigl(h_{A_{i-1}}(x_i)+u_{A_i}(x_i)\bigr)\leq CM^{1+1/d}\, ,
	\end{equation}
	in Section~\ref{sec:insertion}. Bounding each term separately by
	$CM^{2/d}$ would give $CM^{1+2/d}$ and only the earlier range exponent
	$d/(d+2)$ \citep[Theorem~1.1]{CollevecchioTarres2025}.

	To prove the smaller bound, consider the total normalized exit time
	$T(A)=\sum_{x\in A}u_A(x)$. Adding $x$ to $A$ increases $T$ by
	$h_A(x)u_{A\cup\{x\}}(x)$. This identity bounds the square of each $h$ term
	by a constant times the corresponding increase in $T$. For the $u$ terms,
	the square equals that increase times the effective resistance from the
	added vertex to the complement of the enlarged set (Lemma~\ref{lem:schur}).
	The increases sum to $T(A_M)\leq CM^{1+2/d}$, and the resistances have sum
	at most $CM$ (Lemma~\ref{lem:packing}). For each $r\geq1$, vertices $x_i$
	with $\dist(x_i,A_i^c)\geq r$ are pairwise at distance at least $r$.
	Packing balls around these vertices gives the resistance bound. Cauchy--Schwarz then gives the order
	$\sqrt{M M^{1+2/d}}=M^{1+1/d}$ in \eqref{eq:overview-insertion}.

	\medskip
	\noindent\textbf{Step 2. Bound expected elapsed time.}
	Let $S_n$ be the set of sites whose incident edges have all been crossed
	by time $n$. Inside $S_n$, all $2d$ edge weights equal $\beta$, so the
	walk is simple random walk. Outside $S_n$, it has conditional probability
	at least $1/(2d\beta)$ of crossing an untraversed edge at its next step.

	The potential $\Phi_n=2du_{S_n}(X_n)$ is the expected time for simple
	random walk to leave $S_n$ from the current site. Given the walk up to time $n$, the expected
	increment $\Phi_{n+1}-\Phi_n$ is $-1$ when $X_n\in S_n$. A first crossing can
	enlarge $S_n$ and increase the potential. We define a nondecreasing
	process $\Gamma_n$, starting at zero and increasing
	only at first crossings, so that $n+\Phi_n-\Gamma_n$ is a supermartingale
	(Lemma~\ref{lem:martingale}). At a first crossing at time $n+1$, the increment of $\Gamma$ is
	$2d\beta h_{S_n}(X_n)$ for excursions from the departure site plus
	the new potential $2du_{S_{n+1}}(X_{n+1})$ at the arrival site. The factor $\beta$ comes from the
	lower bound on the probability of a first crossing. Optional stopping
	and $\Phi_n\geq0$ then give
	\[
	\E_\beta\sigma_k\leq\E_\beta\Gamma_{\sigma_k}\, .
	\]

	\medskip
	\noindent\textbf{Step 3. Compare the walk with a vertex ordering.}
	The sets $S_n$ are random and can gain two vertices at once, whereas
	\eqref{eq:overview-insertion} adds one vertex at a time.
	Lemma~\ref{lem:walk-order} orders the visited sites so that domain
	monotonicity bounds the $h$ and $u$ terms defining $\Gamma$ by the
	corresponding terms in \eqref{eq:overview-insertion}.
	Each site contributes at most $2d$ departure terms and one
	nonzero arrival term. There are at most $k+1$ visited sites at time
	$\sigma_k$, so \eqref{eq:overview-insertion} gives
	\[
	\Gamma_{\sigma_k}\leq C\beta k^{1+1/d}\, .
	\]
	Proposition~\ref{prop:charge} proves this bound. Together with Step~2,
	it gives the expected-time estimate and hence the range lower bound
	(Section~\ref{sec:range}).

	Bounds on the individual increments of $\Gamma$ also give an exponential
	supermartingale, from which we obtain
	\[
	\P_\beta\{\sigma_k>8C\beta k^{1+1/d}\}
	\leq\exp\bigl(-C^{-1}k^{(d-1)/d}\bigr)\, .
	\]
	Taking $k$ of order $(n/\beta)^{\frac{d}{d+1}}$ gives the range lower-tail
	bound in Section~\ref{sec:tail}. In Sections~\ref{sec:displacement}
	and~\ref{sec:sharp}, we deduce displacement and box exit-time bounds and
	show that, in a bound uniform in $\beta$, neither power in \eqref{eq:main-range} can be improved
	while the other is held fixed. The occupation estimate in Section~\ref{sec:return}
	uses a separate argument based on a truncated Green function.

	\subsection{Related work}\label{ssec:related-work}

	\subsubsection{Once-reinforced random walk}
	The previous range lower bound on $\Z^d$ is due to
	\citet[Theorem~1.1]{CollevecchioTarres2025}. For every $\beta>1$, they
	proved that there is $\varepsilon>0$, depending on $d$ and $\beta$, such
	that, for all sufficiently large $n$,
	\[
	\P_\beta\{|R_n|\leq\varepsilon n^{d/(d+2)}\}
	\leq\exp\bigl(-\varepsilon n^{d/(d+2)}\bigr)\, .
	\]
	We improve the range exponent from $d/(d+2)$ to $d/(d+1)$, with the
	smaller lower-tail exponent $(d-1)/(d+1)$
	(Theorems~\ref{thm:main} and~\ref{thm:whp}). Their proof used a
	continuous-time embedding and a change of measure to compare the walk
	with simple random walk. Our proof bounds expected exit times summed
	over increasing sets.

	On $\Z$, once-reinforced random walk is recurrent
	\citep[Theorem~3.2]{Davis1990}, and the asymptotics of every moment of
	$|R_n|/\sqrt n$ are known for every positive weight $\beta$ on traversed edges
	\citep[Theorem~1]{PfaffelhuberStiefel2021}.
	Recurrence holds on the doubly infinite ladder for every $\beta>1$
	\citep[Theorem~2.1]{Sellke2006}, and on $\Z$ times a finite connected
	graph for sufficiently large $\beta$. In the latter setting,
	\citet{KiousSchapiraSingh2021} also proved a shape theorem for the range.

	On regular trees of degree at least three, the walk is transient with
	positive speed for every $\beta>1$, and its distance from the root
	satisfies an invariance principle \citep{DurrettKestenLimic2002}.
	In contrast, \citet{KiousSidoravicius2018} constructed trees of polynomial
	growth on which increasing reinforcement changes transience to
	recurrence. \citet[Theorem~1]{CollevecchioKiousSidoravicius2020}
	identified the critical reinforcement on every infinite locally finite
	tree and classified recurrence and transience away from that value.
	For $\Z^d$ with $d\geq3$, Beffara and Sidoravicius conjectured a phase
	transition in $\beta$ \citep[Section~2]{Kozma2012}.

	\subsubsection{Site reinforcement}
	In the physics literature, \citet{Sapozhnikov1994} independently
	introduced the once-site-reinforced self-attracting walk.
	Early simulations were not precise enough to determine the displacement
	exponent or whether the walk undergoes a phase transition as the
	reinforcement varies. For sufficiently large reinforcement,
	\citet*{OrdemannBerkolaikoHavlinBunde2000} predicted root-mean-square
	displacement of order $n^{1/(d+1)}$ and expected range of order
	$n^{d/(d+1)}$; see also \citet*{OrdemannTomerBerkolaikoHavlinBunde2001}.
	\citet*{FosterGrassbergerPaczuski2009} supported these predictions with
	further simulations and heuristic arguments, and predicted the same
	exponents for edge reinforcement.

	For several variants of once-reinforced walk  on $\Z^2$,
	\citet[Conjecture~7]{Beffara2011} predicted recurrence at sufficiently
	large reinforcement, range size of order $t^{2/3}$, diameter of order
	$t^{1/3}$, and convergence in probability to a deterministic shape
	after rescaling by the diameter. 

	\subsubsection{Rotor walk}
	The same range lower bound holds for rotor walk on $\Z^d$:
	\citet[Theorem~1.1]{FlorescuLevinePeres2016} proved that it visits at least
	a constant multiple of $n^{d/(d+1)}$ sites in $n$ steps, for every rotor
	mechanism and initial configuration. On $\Z^2$ with the clockwise
	mechanism and independent uniform initial rotors,
	\citet[Theorem~1.1]{BouRabeePeres2026} proved that the range size divided
	by $n^{2/3}$ converges almost surely to a deterministic positive constant.

	\subsection{Notation and conventions}

	\begin{itemize}
		\item $x\sim y$ means that $x$ and $y$ are nearest neighbors in $\Z^d$, and
		$\dist(x,D)$ is the least number of edges in a path from $x$ to a nonempty set
		$D\subset\Z^d$, which is the $\ell^1$ distance from $x$ to $D$.
		We write $\dist(x,y)$ for $\dist(x,\{y\})$.
		\item $|D|$ is the cardinality of a finite set $D$, and $\|\cdot\|_\infty$ is
		the $\ell^\infty$ norm on $\Z^d$.
		\item $C\geq1$ denotes a finite constant depending only on $d$ that may
		change from line to line. We write $C^{-1}$ for the corresponding lower bounds.
	\end{itemize}

	\subsection{Formalization}\label{ssec:formalization}

	Every theorem, lemma and proposition of this paper has been formalized in
	Lean~4 and proved from Mathlib, together with the almost sure bounds stated
	after Theorem~\ref{thm:whp} and in Proposition~\ref{prop:displacement}.  No
	proof uses an axiom beyond the three of Lean's own logic, and no proof uses
	\texttt{sorry}.  The development is at
	\url{https://github.com/nitromannitol/ORRW-Lower-Bound}.

	The formalization defines effective resistance by \eqref{eq:schur};
	it does not prove the electrical-network interpretation.

	\section*{Acknowledgments}
	The research of Y. Peres was supported by National Natural Science Foundation of China grant
	RFIS-W2531011.  We acknowledge use of ChatGPT 5.6.

	\section{Exit times for increasing sets}\label{sec:insertion}

	Let $(Y_n)_{n\geq0}$ be simple random walk on $\Z^d$, with law $\P_x$ and
	expectation $\E_x$ when started at $x$. For a finite set $A$, let
	$\tau_A=\inf\{n\geq0:Y_n\notin A\}$ be its exit time. The
	\defn{Dirichlet Laplacian} $\Delta_A$ acts on functions supported in $A$ by
	\[
	(\Delta_Af)(x)\coloneqq\sum_{y\in A,\ y\sim x}f(y)-2df(x)
	\qquad(x\in A)\, .
	\]
	Thus $(2d)^{-1}\Delta_A$ is the generator of the walk killed on leaving
	$A$. Extending $f$ by zero and summing over undirected edges gives
	\[
	\langle f,\Delta_Af\rangle=-\sum_{\{x,y\}}\bigl(f(x)-f(y)\bigr)^2\, .
	\]
	A nonzero function supported in a finite set differs across some edge,
	so $\Delta_A$ is symmetric and negative definite, hence invertible.

	Define the normalized exit time $u_A$ and its sum $T(A)$ by
	\[
	u_A\coloneqq-\Delta_A^{-1}\one_A,
	\qquad T(A)\coloneqq\sum_{x\in A}u_A(x)\, ,
	\]
	with $u_A=0$ off $A$, $u_\varnothing=0$, and $T(\varnothing)=0$.
	First-step analysis gives
	\begin{equation}\label{eq:mean-steps}
		2du_A(x)=\E_x\tau_A
		\qquad(x\in A)\, .
	\end{equation}
	For $x\notin A$, let
	\begin{equation}\label{eq:h-def}
		h_A(x)\coloneqq1+\sum_{y\in A,\ y\sim x}u_A(y)\, .
	\end{equation}
	This is the expected time for simple random walk started at $x$ to return
	to $x$ for the first time or leave $A\cup\{x\}$. The first step takes one unit of time.
	If it lands at $y\in A$, the expected time then spent in $A$ is $2du_A(y)$.

	We will also use domain monotonicity. If $A\subseteq D$, then
	$\tau_A\leq\tau_D$ along the same walk started in $A$. Together with
	nonnegativity and zero extension, \eqref{eq:mean-steps} and \eqref{eq:h-def}
	give
	\[
	u_A\leq u_D\ \text{on }A
	\quad\text{and}\quad
	h_A(x)\leq h_D(x)\ \text{whenever }A\subseteq D\text{ and }x\notin D\, .
	\]

	\begin{lemma}[Exit time bounds]\label{lem:max-exit}
		Let $d\geq1$. Every finite nonempty $A\subset\Z^d$ satisfies
		\[
		\max_{x\in A}u_A(x)\leq C|A|^{2/d},
		\qquad
		\max_{x\notin A}h_A(x)\leq C|A|^{2/d},
		\qquad
		T(A)\leq C|A|^{1+2/d}\, .
		\]
	\end{lemma}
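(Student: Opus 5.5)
The plan is to prove the first bound, the bound on $\max_A u_A$, and to deduce the other two from it. Once it holds, $T(A)\leq|A|\max_A u_A\leq C|A|^{1+2/d}$ is immediate. For $x\notin A$, the definition \eqref{eq:h-def} gives
\[
h_A(x)\leq 1+2d\max_A u_A .
\]
Since $|A|\geq1$, this is at most $C|A|^{2/d}$ after enlarging $C$. So everything reduces to showing that $\E_x\tau_A\leq C|A|^{2/d}$ for every $x\in A$, using \eqref{eq:mean-steps}.

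To bound the exit time, I would use a uniform estimate on the probability of staying in $A$ for a block of time. Fix $t\geq1$ and let $Y$ be simple random walk started at $x\in A$. Then
\[
\P_x\{\tau_A>t\}\leq\P_x\{Y_t\in A\}=\sum_{y\in A}p_t(x,y)\leq|A|\max_y p_t(x,y)\leq C_0|A|\,t^{-d/2},
\]
where the last step is the standard heat-kernel bound $p_t(x,y)\leq C_0t^{-d/2}$ on $\Z^d$. This bound can be quoted from a local limit theorem, or proved directly by factoring the walk into coordinates and bounding one-dimensional binomial probabilities. Now choose $t=\lceil(2C_0|A|)^{2/d}\rceil$, so that $\P_z\{\tau_A>t\}\leq1/2$ uniformly in the starting point $z\in A$.

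Next, apply the Markov property at the times $t,2t,\dots$. Since the bound holds from every starting point in $A$, it gives $\P_x\{\tau_A>jt\}\leq2^{-j}$, and therefore
\[
\E_x\tau_A\leq\sum_{j\geq0}t\,\P_x\{\tau_A>jt\}\leq 2t\leq C|A|^{2/d}.
\]
Dividing by $2d$ gives the first bound. The argument works for every $d\geq1$, including $d=1$, where it gives $|A|^2$.

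The main obstacle is the heat-kernel bound $\sup_y p_t(x,y)\leq C_0t^{-d/2}$; everything else is routine. If it must be proved from scratch rather than cited, I would use the Fourier representation
\[
p_t(0,y)\leq(2\pi)^{-d}\int_{[-\pi,\pi]^d}|\phi(\theta)|^t\,d\theta,\qquad \phi(\theta)=d^{-1}\sum_{i=1}^d\cos\theta_i .
\]
Two regions need care. Near $0$ and near $(\pi,\dots,\pi)$, the second of which arises from periodicity, one has $|\phi(\theta)|\leq1-c|\theta'|^2$ for the appropriate shifted variable $\theta'$, and this gives order $t^{-d/2}$. Away from these two points, $|\phi|$ is bounded by a constant less than $1$, so that region contributes an exponentially small term.

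An alternative route is a Faber--Krahn argument. It would bound the smallest eigenvalue of $-\Delta_A$ below by $c|A|^{-2/d}$ using the discrete isoperimetric inequality. Converting that spectral gap into a pointwise bound on $u_A$, however, needs extra work, for example an $\ell^\infty$ estimate. I would therefore follow the heat-kernel route.
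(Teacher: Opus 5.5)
Your proof is correct, and it takes a different route from the paper's. The reductions for $h_A$ and $T(A)$ are the same in both. The difference lies in the analytic input and in how that input becomes a bound on the mean exit time.

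The paper derives the Faber--Krahn inequality $\lambda(A)\geq C^{-1}|A|^{-2/d}$ from the Nash inequality on $\Z^d$. It then cites Telcs (Proposition~3.5) to convert this spectral bound into $\max_{x\in A}\E_x\tau_A\leq C|A|^{2/d}$.

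You instead combine three ingredients:
\begin{itemize}
\item the on-diagonal heat-kernel bound $\sup_y p_t(x,y)\leq C_0t^{-d/2}$;
\item the inclusion $\{\tau_A>t\}\subseteq\{Y_t\in A\}$;
\item a Markov-property iteration over blocks of length $t\asymp|A|^{2/d}$.
\end{itemize}
This gives $\E_x\tau_A\leq 2t$ directly. It also gives the stronger, uniform exponential tail $\P_x\{\tau_A>jt\}\leq2^{-j}$ for free.

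Your argument is self-contained. Its only nontrivial ingredient is the heat-kernel bound, and your Fourier sketch proves it, including the treatment of the periodicity point $(\pi,\ldots,\pi)$. You also avoid the spectral-to-pointwise step, which you correctly flagged as the extra work in the Faber--Krahn route and which the paper handles by citation.

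The paper's version is shorter on the page because of the references. On $\Z^d$, however, the Nash inequality, the $t^{-d/2}$ heat-kernel bound and the Faber--Krahn inequality are essentially equivalent, so both proofs rest on the same underlying fact.
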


	\begin{proof}
		The Nash inequality on $\Z^d$ with exponent $d$
		\citep[Lemma~3.13]{Barlow2017} gives the Faber--Krahn inequality
		$\lambda(A)\geq C^{-1}|A|^{-2/d}$ for the smallest eigenvalue of $-\Delta_A$.
		On the unit-conductance lattice $A$ has weighted volume $2d|A|$ and the
		normalized Dirichlet Laplacian is $\Delta_A/(2d)$, so the mean number of steps needed
		to leave $A$ is at most $C|A|^{2/d}$
		\citep[Proposition~3.5]{Telcs2006}, and that mean
		is $2du_A$ by \eqref{eq:mean-steps}.  The bound for $h_A$ follows from
		\eqref{eq:h-def} and the degree bound, and the bound for $T(A)$ by summing
		over $A$.
	\end{proof}

	\begin{lemma}[Adding one vertex]\label{lem:schur}
		Let $d\geq1$, let $A\subset\Z^d$ be finite, let $x\notin A$, and set
		$A^+\coloneqq A\cup\{x\}$.  Give every edge of $\Z^d$ unit conductance and
		let $\Reff\bigl(x\leftrightarrow(A^+)^c\bigr)$ be the effective resistance
		between $x$ and the grounded complement of $A^+$.  Then
		\begin{equation}\label{eq:schur}
			\Reff\bigl(x\leftrightarrow(A^+)^c\bigr)=-\Delta_{A^+}^{-1}(x,x)\geq\frac1{2d},
			\qquad
			u_{A^+}(x)=\Reff\bigl(x\leftrightarrow(A^+)^c\bigr)h_A(x)\, ,
		\end{equation}
		and
		\begin{equation}\label{eq:deltaT}
			T(A^+)-T(A)=h_A(x)u_{A^+}(x)\, .
		\end{equation}
		Consequently
		\begin{equation}\label{eq:hbound}
			h_A(x)^2\leq2d\bigl(T(A^+)-T(A)\bigr),
			\qquad
			u_{A^+}(x)^2
			=\Reff\bigl(x\leftrightarrow(A^+)^c\bigr)\bigl(T(A^+)-T(A)\bigr)\, .
		\end{equation}
	\end{lemma}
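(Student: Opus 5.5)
The plan is a Schur-complement computation for $\Delta_{A^+}$ in the block decomposition $A^+=A\cup\{x\}$. Write $G_A\coloneqq-\Delta_A^{-1}$ and $G\coloneqq-\Delta_{A^+}^{-1}$, both symmetric and positive definite with nonnegative entries (they are Green functions of the killed walk). Let $b\coloneqq\one_{\{y\in A:\,y\sim x\}}$, so that
\[
-\Delta_{A^+}=\begin{pmatrix}-\Delta_A & -b\\ -b^{T} & 2d\end{pmatrix}.
\]
The Schur complement of the $A$-block is
\[
s=2d-b^{T}G_Ab>0,\qquad G(x,x)=1/s .
\]
Since $G(x,x)=\E_x[\text{visits to }x\text{ before }\tau_{A^+}]/(2d)$, we get $G(x,x)\geq 1/(2d)$. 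The identification $G(x,x)=\Reff(x\leftrightarrow(A^+)^c)$ is the standard fact that the Green function at the source equals the effective resistance to the grounded set. It follows because the unit-current potential $v=G\one_x$ satisfies $-\Delta_{A^+}v=\one_x$ with zero boundary values, so $v(x)=\langle v,-\Delta v\rangle$ is the energy of the unit flow. Since the formalization takes \eqref{eq:schur} as the definition, I would mainly record this identity.

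Next I will compute $u_{A^+}(x)$ by solving $-\Delta_{A^+}u=\one_{A^+}$ in blocks. The first block row gives $-\Delta_A u|_A - b\,u(x)=\one_A$, hence $u|_A=u_A+u(x)\,G_Ab$. The $x$-row gives $-b^{T}u|_A+2d\,u(x)=1$. Substituting,
\[
u(x)\,s=1+b^{T}u_A=h_A(x),
\]
using \eqref{eq:h-def}. This gives $u_{A^+}(x)=G(x,x)\,h_A(x)$, the second identity in \eqref{eq:schur}.

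For \eqref{eq:deltaT}, I will sum the expression for $u_{A^+}$ over $A^+$:
\[
T(A^+)=T(A)+u(x)\,\one_A^{T}G_Ab+u(x).
\]
By symmetry of $G_A$, $\one_A^{T}G_Ab=u_A^{T}b=h_A(x)-1$, so $T(A^+)-T(A)=u(x)\,h_A(x)$.

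The consequences in \eqref{eq:hbound} follow directly. Multiplying $T(A^+)-T(A)=h_Au_{A^+}(x)$ by $u_{A^+}(x)/h_A(x)=\Reff$ gives the second relation. For the first, $h_A^2=(T(A^+)-T(A))/\Reff\leq 2d\,(T(A^+)-T(A))$, using $\Reff\geq1/(2d)$ and $h_A\geq1>0$. The only delicate point is the lower bound $G(x,x)\geq1/(2d)$, equivalently $s\leq 2d$. This holds because $b^{T}G_Ab\geq0$ by positive semidefiniteness, so there is no real obstacle; the main care is the bookkeeping of zero extensions.
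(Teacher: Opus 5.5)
Your proof is correct, but it takes a different route from the paper. The paper argues probabilistically. It quotes the fact that simple random walk from $x$ visits $x$ a geometric number of times before leaving $A^+$, with mean $2d\Reff$, and that each excursion from $x$ lasts $h_A(x)$ steps on average, so $\E_x\tau_{A^+}=2d\Reff\,h_A(x)$. It then uses the strong Markov property at $\tau_A$ to write $u_{A^+}(y)=u_A(y)+G(y,x)u_{A^+}(x)/\Reff$ on $A$, with $G=-\Delta_{A^+}^{-1}$, and sums using the symmetry and row sums of $G$.

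You instead eliminate the $A$-block of $-\Delta_{A^+}$ directly. The Schur complement $s=2d-b^{T}G_Ab$ gives $G(x,x)=1/s$, and the $x$-row of the linear system gives $s\,u_{A^+}(x)=h_A(x)$. Your formula $u_{A^+}|_A=u_A+u_{A^+}(x)\,G_Ab$ is exactly the paper's strong-Markov identity, because $G(y,x)/G(x,x)=(G_Ab)(y)$ for $y\in A$. You then use the symmetry and row sums of $G_A$ rather than those of $G$.

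Your route is self-contained and purely algebraic. The bound $\Reff\geq1/(2d)$ becomes $b^{T}G_Ab\geq0$, and no visit-count formula from electrical network theory is needed. This fits how the formalization takes \eqref{eq:schur} as the definition of $\Reff$. The paper's route instead gives the excursion interpretation of $u_{A^+}(x)=\Reff\,h_A(x)$: the number of visits to $x$ times the mean excursion length. That interpretation explains how $h$ and $u$ later enter the supermartingale increments.

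Your identification of $G(x,x)$ with $\Reff$, via the unit-current voltage $v=G\one_x$, is standard and fine. Your block computation also covers the degenerate case $A=\varnothing$, where $s=2d$ and $h_\varnothing(x)=1$.
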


	\begin{proof}
		Write $R\coloneqq\Reff\bigl(x\leftrightarrow(A^+)^c\bigr)$.  Before leaving
		$A^+$, simple random walk started at $x$ visits $x$ a geometric number
		of times with mean $2dR$
		\citep[Equation~(2.5)]{LyonsPeres2016}, and the expected time from
		each visit until the next return or exit is $h_A(x)$, so
		$\E_x\tau_{A^+}=2dRh_A(x)$ and \eqref{eq:mean-steps} gives
		\eqref{eq:schur}, the inequality coming from the visit at time zero.
		The strong Markov property at $\tau_A$ gives
		$u_{A^+}(y)=u_A(y)-\Delta_{A^+}^{-1}(y,x)u_{A^+}(x)/R$ for $y\in A$, and
		$-\Delta_{A^+}^{-1}$ is symmetric with row sums $u_{A^+}$, so summing over
		$A^+$ gives \eqref{eq:deltaT} and \eqref{eq:hbound} follows from
		\eqref{eq:schur}.
	\end{proof}

	\begin{lemma}[Sum of effective resistances]\label{lem:packing}
		Let $d\geq2$, let $M\geq1$ be an integer, let $x_1,\ldots,x_M$ be distinct
		vertices of $\Z^d$, and let $A_i\coloneqq\{x_1,\ldots,x_i\}$ for
		$1\leq i\leq M$.  Then
		\[
			\sum_{i=1}^M\Reff\bigl(x_i\leftrightarrow A_i^c\bigr)\leq CM\, .
		\]
	\end{lemma}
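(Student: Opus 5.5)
We bound each resistance by a quantity depending only on the distance $r_i\coloneqq\dist(x_i,A_i^c)$, and then count how many indices can have a given distance, using the packing observation from the proof overview.

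\textbf{Step 1: a distance-based bound on each resistance.} Fix $i$ and write $r=r_i\geq1$. Since $x_i\in A_i$, the $\ell^1$ ball $B(x_i,r-1)=\{y:\dist(x_i,y)\leq r-1\}$ lies inside $A_i$. Rayleigh monotonicity (shorting, or equivalently enlarging the grounded set) gives
\[
\Reff\bigl(x_i\leftrightarrow A_i^c\bigr)\leq\Reff\bigl(x_i\leftrightarrow B(x_i,r-1)^c\bigr).
\]
In terms of \eqref{eq:schur} this is the Green-function monotonicity $-\Delta_{A}^{-1}(x,x)\leq-\Delta_{D}^{-1}(x,x)$ for $A\subseteq D$. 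The right side is at most $\rho_d(r)$, where $\rho_2(r)\leq C\log(1+r)$ and $\rho_d(r)\leq C$ for $d\geq3$.

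If we stay within the paper's definition of $\Reff$, the bound $\rho_d$ can be checked by a simple argument. By the Nash--Williams inequality, or directly by transience, the Green function of the walk killed outside a box of radius $r$ at its center is $O(1)$ when $d\geq3$. When $d\geq2$ it is $O(\log r)$, using the Green-function bound for a box from the Telcs/Barlow references already cited, or the standard $\ell^\infty$-box estimate.

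If $d\geq3$, Step 1 alone gives the sum at most $CM$, so the remaining steps are needed only for $d=2$. Still, we argue uniformly.

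\textbf{Step 2: packing.} Suppose $i<j$ and both $r_i,r_j\geq r$. Then $x_j\notin A_i$ would force $\dist(x_i,A_i^c)\leq\dist(x_i,x_j)$. Also $x_i\in A_j$ and $x_i\neq x_j$. Since $x_j\in A_j$ has $\dist(x_j,A_j^c)\geq r$, every point within distance $r-1$ of $x_j$ lies in $A_j$. That alone does not separate them, so use the first fact instead: $x_j\in A_i^c$ gives $\dist(x_i,x_j)\geq r_i\geq r$.

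Hence the set $I_r\coloneqq\{i:r_i\geq r\}$ indexes points that are pairwise at distance at least $r$. Moreover, each ball $B(x_i,r-1)$ with $i\in I_r$ lies in $A_i\subseteq A_M$. The balls $B(x_i,\lfloor(r-1)/2\rfloor)$ are disjoint, each has volume at least $c r^d$, and all sit inside $A_M$, which has $M$ points. Therefore
\[
|I_r|\leq CMr^{-d}.
\]

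\textbf{Step 3: summation.} Let $\rho=\rho_d$ be nondecreasing with $\rho(1)\leq C$. Summing by parts over dyadic scales,
\[
\sum_i\rho(r_i)\leq\rho(1)M+\sum_{k\geq0}\bigl(\rho(2^{k+1})-\rho(2^k)\bigr)\,|I_{2^k}|\leq CM+C\sum_k\bigl(\rho(2^{k+1})-\rho(2^k)\bigr)M2^{-kd}.
\]
For $d=2$ the increments $\rho(2^{k+1})-\rho(2^k)$ are $O(1)$, so the series converges and gives $CM$.

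\textbf{Main obstacle.} The main obstacle is Step 1 for $d=2$, namely the logarithmic resistance bound. The formalization defines $\Reff$ by \eqref{eq:schur}, so the bound must be justified through Green functions, for example via domain monotonicity and an explicit estimate on the box Green function at the center. I would first try to avoid sharp constants: bound $-\Delta_B^{-1}(x,x)=(2d)^{-1}\E_x[\#\text{visits to }x\text{ before }\tau_B]$ using the local central limit estimate $\P_x(Y_{n}=x)\leq C/n$, summed up to time $r^2$, plus the exponential tail of $\tau_B/r^2$. This gives $C\log(1+r)$.
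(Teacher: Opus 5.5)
\textbf{Gap in Step 1.} Your Step 1 applies monotonicity in the wrong direction. Since $B(x_i,r-1)\subseteq A_i$, the grounded set $B(x_i,r-1)^c$ contains $A_i^c$. Replacing $A_i^c$ by $B(x_i,r-1)^c$ therefore enlarges the grounded set and lowers the resistance, so Rayleigh gives $\Reff(x_i\leftrightarrow B(x_i,r-1)^c)\leq\Reff(x_i\leftrightarrow A_i^c)$. This is the reverse of what you wrote. Your own Green-function version, $-\Delta_A^{-1}(x,x)\leq-\Delta_D^{-1}(x,x)$ for $A\subseteq D$, applied with $A=B(x_i,r-1)$ and $D=A_i$, says the same thing.

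The displayed inequality is false in general. In $d=2$, let $A_i$ be a large box around $x_i$ with one vertex $p$ at distance $r$ removed. As the box grows, $\Reff(x_i\leftrightarrow A_i^c)$ increases to the two-point resistance $\Reff(x_i\leftrightarrow p)\sim\pi^{-1}\log r$. That is asymptotically twice $\Reff(x_i\leftrightarrow B(x_i,r-1)^c)\sim(2\pi)^{-1}\log r$.

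Consequently, the estimate you single out as the main obstacle, the killed Green function at the center of a box, bounds the target resistance from \emph{below} and cannot close Step 1. The Nash--Williams inequality you invoke also only gives lower bounds on resistance. The conclusion $\Reff(x_i\leftrightarrow A_i^c)\leq\rho_d(r_i)$ is still true, but the upper bound must come from the complement side. One option is to pick $p\in A_i^c$ with $\dist(x_i,p)=r_i$ and use $\Reff(x_i\leftrightarrow A_i^c)\leq\Reff(x_i\leftrightarrow p)$ together with a two-point estimate. For $d\geq3$, you can instead compare $A_i$ with $D=\Z^d$, which bounds the resistance by the Green function of $\Z^d$ at the origin divided by $2d$.

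\textbf{The fix.} No logarithmic estimate is needed at all. The paper sends a unit flow along a shortest path from $x_i$ to $A_i^c$ and uses Thomson's principle to get $\Reff(x_i\leftrightarrow A_i^c)\leq r_i$. Your packing bound $|I_r|\leq CMr^{-d}$ from Step 2 is correct and is exactly the paper's argument. It then gives $\sum_i r_i=\sum_{r\geq1}|I_r|\leq CM\sum_{r\geq1}r^{-d}\leq CM$, since $d\geq2$. Equivalently, your Step 3 works verbatim with $\rho(r)=r$, because the terms $2^k\cdot CM2^{-kd}$ are summable when $d\geq2$. Replacing Step 1 by this one-line linear bound gives a complete proof; the sharper $O(\log r)$ resistance bound buys nothing here.
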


	\begin{proof}
		Let $\rho_i\coloneqq\dist(x_i,A_i^c)$.  Sending a unit flow along a shortest
		path to $A_i^c$, Thomson's principle
		\citep[Section~1.3.5]{DoyleSnell1984} gives
		\begin{equation}\label{eq:thomson}
			\Reff\bigl(x_i\leftrightarrow A_i^c\bigr)\leq\rho_i\, .
		\end{equation}
		For each integer $r\geq2$, the sites with $\rho_i\geq r$ are separated
		by distance at least $r$: if $i<j$, then $x_j\notin A_i$, whereas every
		vertex within distance $r-1$ of $x_i$ lies in $A_i$.
		Their $\ell^1$-balls of radius $\lfloor(r-1)/2\rfloor$ are therefore
		disjoint subsets of $A_M$, each containing at least $C^{-1}r^d$ vertices.
		Thus $|\{i:\rho_i\geq r\}|\leq CMr^{-d}$, and
		\[
		\sum_{i=1}^M\Reff\bigl(x_i\leftrightarrow A_i^c\bigr)
		\leq\sum_{r\geq1}|\{i:\rho_i\geq r\}|
		\leq CM\sum_{r\geq1}r^{-d}\leq CM\, ,
		\]
		since $d\geq2$.
	\end{proof}

	\begin{lemma}[Sum over successive vertex additions]\label{lem:insertion}
		Let $d\geq2$, let $M\geq1$ be an integer, let $x_1,\ldots,x_M$ be distinct
		vertices of $\Z^d$, and let $A_i\coloneqq\{x_1,\ldots,x_i\}$ for
		$0\leq i\leq M$.  Then
		\begin{equation}\label{eq:insertion-sum}
			\sum_{i=1}^M\bigl(h_{A_{i-1}}(x_i)+u_{A_i}(x_i)\bigr)
			\leq CM^{1+1/d}\, .
		\end{equation}
	\end{lemma}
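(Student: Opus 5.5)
The plan is to split the sum in \eqref{eq:insertion-sum} into the $h$ terms and the $u$ terms and bound each by Cauchy--Schwarz. Lemma~\ref{lem:schur} relates the square of each term to the increment $\Delta_i\coloneqq T(A_i)-T(A_{i-1})$. Apply Lemma~\ref{lem:schur} with $A=A_{i-1}$, $x=x_i$ and $A^+=A_i$, which is legitimate because the $x_i$ are distinct, so $x_i\notin A_{i-1}$. By \eqref{eq:deltaT}, $\Delta_i=h_{A_{i-1}}(x_i)u_{A_i}(x_i)\geq0$. The increments telescope, since $T(A_0)=T(\varnothing)=0$:
\[
\sum_{i=1}^M\Delta_i=T(A_M)\leq CM^{1+2/d}
\]
by the third bound of Lemma~\ref{lem:max-exit}, as $|A_M|=M$.

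For the $h$ terms, the first inequality in \eqref{eq:hbound} gives $h_{A_{i-1}}(x_i)\leq\sqrt{2d\,\Delta_i}$. Cauchy--Schwarz then yields
\[
\sum_{i=1}^M h_{A_{i-1}}(x_i)\leq\sqrt{2d}\,\sqrt{M}\Bigl(\sum_{i=1}^M\Delta_i\Bigr)^{1/2}\leq C\sqrt{M\cdot M^{1+2/d}}=CM^{1+1/d}.
\]

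For the $u$ terms, the identity in \eqref{eq:hbound} gives $u_{A_i}(x_i)=\sqrt{\Reff(x_i\leftrightarrow A_i^c)\,\Delta_i}$. Cauchy--Schwarz, with the resistances in one factor and the increments in the other, gives
\[
\sum_{i=1}^M u_{A_i}(x_i)\leq\Bigl(\sum_{i=1}^M\Reff\bigl(x_i\leftrightarrow A_i^c\bigr)\Bigr)^{1/2}\Bigl(\sum_{i=1}^M\Delta_i\Bigr)^{1/2}\leq\sqrt{CM}\,\sqrt{CM^{1+2/d}}=CM^{1+1/d},
\]
using Lemma~\ref{lem:packing}, which requires $d\geq2$. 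Adding the two estimates proves \eqref{eq:insertion-sum}.

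Once the earlier lemmas are in hand, no step is a real obstacle. The substantive input is Lemma~\ref{lem:packing}: it shows that the resistances average to $O(1)$ rather than being of size $M^{2/d}$. The only care needed is to check that the sets in Lemma~\ref{lem:schur} match the indexing here: $A_{i-1}$ for $h$, and $A_i=A_{i-1}\cup\{x_i\}$ for both $u$ and the resistance.
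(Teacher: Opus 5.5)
Your proposal is correct and matches the paper's proof: you apply Cauchy--Schwarz to the $h$ terms via $h_{A_{i-1}}(x_i)^2\le 2d\,\Delta_i$, and to the $u$ terms via $u_{A_i}(x_i)^2=\Reff(x_i\leftrightarrow A_i^c)\,\Delta_i$. In both cases the increments telescope to $T(A_M)\le CM^{1+2/d}$, and Lemma~\ref{lem:packing} bounds the sum of the resistances by $CM$.
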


	\begin{proof}
		The first relation in \eqref{eq:hbound}, Lemma~\ref{lem:max-exit}, and the
		Cauchy--Schwarz inequality give
		\[
		\sum_{i=1}^Mh_{A_{i-1}}(x_i)^2\leq2dT(A_M)\leq CM^{1+2/d},
		\qquad
		\sum_{i=1}^Mh_{A_{i-1}}(x_i)\leq CM^{1+1/d}\, .
		\]

		Write $\Reff_i\coloneqq\Reff(x_i\leftrightarrow A_i^c)$. By the second
		identity in \eqref{eq:hbound},
		$u_{A_i}(x_i)^2=\Reff_i(T(A_i)-T(A_{i-1}))$.
		Cauchy--Schwarz and Lemmas~\ref{lem:packing} and~\ref{lem:max-exit} give
		\[
		\sum_{i=1}^Mu_{A_i}(x_i)
		\leq\Bigl(\sum_{i=1}^M\Reff_i\Bigr)^{1/2}T(A_M)^{1/2}
		\leq CM^{1/2}\bigl(M^{1+2/d}\bigr)^{1/2}=CM^{1+1/d}\, .
		\]
		Adding the last two estimates proves \eqref{eq:insertion-sum}.
	\end{proof}

	\section{A potential and a supermartingale}\label{sec:potential}

	Let
	\[
		\begin{gathered}
		S_n\coloneqq\{x\in\Z^d:\text{every edge incident to $x$ lies in $E_n$}\},\\
		\nu_n\coloneqq\#\{y\sim X_n:\{X_n,y\}\notin E_n\},
		\qquad
		\mu_n\coloneqq\beta(2d-\nu_n)+\nu_n\, .
		\end{gathered}
	\]
	Here $S_0=\varnothing$, $\nu_n$ counts the untraversed edges incident to
	$X_n$, and $\mu_n$ is the total weight of the edges incident to $X_n$.
	We have $\mu_n\leq2d\beta$ and $X_n\in S_n$ if and only if $\nu_n=0$. Let
	\[
		\sigma_k\coloneqq\inf\{n\geq0:|E_n|=k\},
		\qquad\sigma_0\coloneqq0,
		\qquad v_k\coloneqq X_{\sigma_k-1},
		\qquad w_k\coloneqq X_{\sigma_k}\, ,
	\]
	for every integer $k\geq1$, with $v_k$ and $w_k$ defined on
	$\{\sigma_k<\infty\}$. Thus $\sigma_k$ is the time of the $k$th first
	crossing, and $v_k$ and $w_k$ are its departure and arrival sites.  Since $|E_n|$ increases by at most one at each step,
	\begin{equation}\label{eq:sigma-range}
		\{\sigma_k\leq n\}=\{|E_n|\geq k\}\, .
	\end{equation}
	For $n\geq1$, write $\eta_n\coloneqq\one_{\{|E_n|>|E_{n-1}|\}}$ for the indicator that the
	$n$th step crosses an untraversed edge, and let $(\mathcal F_n)_{n\geq0}$ be
	the filtration generated by $(X_n)_{n\geq0}$.

	The \defn{potential} is
	\begin{equation}\label{eq:F-def}
		\Phi_n\coloneqq2du_{S_n}(X_n)\, ,
	\end{equation}
	which vanishes when $X_n\notin S_n$.  By \eqref{eq:mean-steps} it is the
	expected number of steps that simple random walk started at $X_n$ needs to
	leave $S_n$.  When $\nu_n\geq1$, so that $X_n\notin S_n$ and $h_{S_n}(X_n)$ is
	defined, let
	\begin{equation}\label{eq:a-def}
		\gamma_n\coloneqq2d\beta h_{S_n}(X_n)\, ,
	\end{equation}
	and let $\gamma_n\coloneqq0$ when $\nu_n=0$.  Define the nondecreasing process $(\Gamma_n)_{n\geq0}$ by
	\begin{equation}\label{eq:C-def}
		\Gamma_0\coloneqq0,
		\qquad
		\Gamma_{n+1}-\Gamma_n\coloneqq\eta_{n+1}\bigl(\gamma_n+\Phi_{n+1}\bigr)\, .
	\end{equation}
	The process $\Gamma_n$ increases only at the steps $\sigma_1,\sigma_2,\ldots$,
	with increment $\gamma_{\sigma_j-1}+\Phi_{\sigma_j}$ at step $\sigma_j$.

	\begin{lemma}[A supermartingale]\label{lem:martingale}
		Let $d\geq1$ and $\beta\geq1$.  Then $(n+\Phi_n-\Gamma_n)_{n\geq0}$ is a
		supermartingale for the filtration $(\mathcal F_n)_{n\geq0}$.
	\end{lemma}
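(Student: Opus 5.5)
We must show $\E_\beta[\Phi_{n+1}-\Phi_n-(\Gamma_{n+1}-\Gamma_n)\mid\mathcal F_n]\le -1$. Equivalently,
\[
\E_\beta\bigl[\Phi_{n+1}(1-\eta_{n+1})\mid\mathcal F_n\bigr]-\Phi_n-\gamma_n\P_\beta(\eta_{n+1}=1\mid\mathcal F_n)\le-1 .
\]
The $\Phi_{n+1}$ contribution on first-crossing steps is cancelled exactly by the $\Phi_{n+1}$ term in $\Gamma$. So only steps along traversed edges matter. On such steps $E_{n+1}=E_n$, hence $S_{n+1}=S_n$ and $\Phi_{n+1}=2du_{S_n}(X_{n+1})$. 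I split into two cases according to $\nu_n$.

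\emph{Case $\nu_n=0$.} Here $X_n\in S_n$, every incident edge is traversed, $\eta_{n+1}=0$, and the walk moves to a uniform neighbor. By the defining equation $-\Delta_{S_n}u_{S_n}=\one_{S_n}$, extended by zero,
\[
\E[\Phi_{n+1}\mid\mathcal F_n]=\frac{2d}{2d}\sum_{y\sim X_n}u_{S_n}(y)=2du_{S_n}(X_n)-1=\Phi_n-1 .
\]
So the drift is exactly $-1$.

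\emph{Case $\nu_n\ge1$.} Now $X_n\notin S_n$, so $\Phi_n=0$ and $\gamma_n=2d\beta h_{S_n}(X_n)$. Each traversed neighbor $y$ is reached with probability $\beta/\mu_n$. Each untraversed neighbor is reached with probability $1/\mu_n$, and that step is a first crossing. The surviving term is
\[
\frac{\beta}{\mu_n}\sum_{y\sim X_n,\ \{X_n,y\}\in E_n}2du_{S_n}(y)\le\frac{2d\beta}{\mu_n}\bigl(h_{S_n}(X_n)-1\bigr),
\]
using \eqref{eq:h-def} and $u\ge0$. The subtracted term is $\gamma_n\nu_n/\mu_n\ge 2d\beta h_{S_n}(X_n)/\mu_n$. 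Their difference is therefore at most $-2d\beta/\mu_n\le-1$, because $\mu_n\le2d\beta$. The case is done.

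Integrability is needed for the supermartingale property. The walk moves one step at a time, so $S_n$ is contained in a ball of radius $n$. By Lemma~\ref{lem:max-exit}, $\Phi_n$, $\gamma_n$ and $\Gamma_n$ are then bounded by deterministic constants depending on $n$. The only step needing care is the bookkeeping in the second case: confirming that $S_{n+1}=S_n$ on non-first-crossing steps, and that the cancellation uses $\nu_n\ge1$ together with the bound $\mu_n\le 2d\beta$. Neither point presents a real obstacle.
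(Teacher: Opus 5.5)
Your proof is correct and follows the paper's argument essentially verbatim. It uses the same case split on whether $X_n\in S_n$, and the exit-time identity gives drift exactly $-1$ inside $S_n$. Outside $S_n$, the $\Phi_{n+1}$ term cancels on first crossings, and the bound closes using $\nu_n\ge1$ and $\mu_n\le2d\beta$. The differences are cosmetic: you bound the traversed-neighbor sum by $2d(h-1)$ where the paper has equality, and you discard the nonnegative $h(\nu_n-1)$ term instead of displaying the exact drift.
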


	\begin{proof}
		Lemma~\ref{lem:max-exit} and $|S_n|\leq n$ give integrability of
		$\Phi_n$ and $\Gamma_n$.  It remains to prove
		\begin{equation}\label{eq:drift}
			\E_\beta\bigl[\Gamma_{n+1}-\Gamma_n-(\Phi_{n+1}-\Phi_n)\mid\mathcal F_n\bigr]
			\geq1\, .
		\end{equation}
		If $X_n\in S_n$, then, conditional on $\mathcal F_n$, the next site is
		uniform among the neighbors of $X_n$, with
		$S_{n+1}=S_n$ and $\Gamma_{n+1}=\Gamma_n$.  The exit-time identity
		\eqref{eq:mean-steps} gives
		$\E_\beta[\Phi_{n+1}\mid\mathcal F_n]=\Phi_n-1$, so equality holds.

		If $X_n\notin S_n$, then $\Phi_n=0$ and $\nu_n\geq1$.
		When an untraversed edge is crossed, $\Phi_{n+1}$ also appears in the
		increment of $\Gamma_n$, so its contributions to \eqref{eq:drift} cancel.
		Each neighbor in $S_n$ is reached with probability $\beta/\mu_n$.
		Writing $h\coloneqq h_{S_n}(X_n)$, equations~\eqref{eq:h-def} and
		\eqref{eq:a-def} show that the left side of \eqref{eq:drift} is
		\[
		\frac{\nu_n\gamma_n}{\mu_n}-\frac{2d\beta}{\mu_n}(h-1)
		=\frac{2d\beta}{\mu_n}\bigl(h(\nu_n-1)+1\bigr)\geq1\, ,
		\]
		since $h\geq1$ and $\mu_n\leq2d\beta$.
	\end{proof}

	The sets $S_{\sigma_j}$ are random, and the bound in
	\eqref{eq:insertion-sum} holds for every ordering of a finite set of sites.  The next lemma gives, for each realization of the walk, an ordering
	of the sites it has visited for which each $S_{\sigma_j}$ is contained in a
	set $A_i$ of the form $A_i=\{x_1,\ldots,x_i\}$.

	\begin{lemma}[Ordering the visited sites]\label{lem:walk-order}
		Let $k\geq1$ be an integer, and work on the event $\{\sigma_k<\infty\}$.
		Let $V\coloneqq R_{\sigma_k}$ and $M\coloneqq|V|$.  Then $M\leq k+1$, and
		there is an ordering $x_1,\ldots,x_M$ of $V$ for which the sets
		$A_i\coloneqq\{x_1,\ldots,x_i\}$ satisfy the following for every
		$1\leq j\leq k$.
		\begin{enumerate}[label=\textup{(\roman*)}]
			\item If $v_j=x_i$, then $S_{\sigma_{j-1}}\subseteq A_{i-1}$.
			\item If $w_j\in S_{\sigma_j}$ and $w_j=x_i$, then
			$S_{\sigma_j}\subseteq A_i$.  The sites $w_j$ satisfying
			$w_j\in S_{\sigma_j}$ are distinct.
		\end{enumerate}
	\end{lemma}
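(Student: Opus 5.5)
The ordering will sort the visited sites by the time at which they \emph{enter} $S$. For $x\in V$, let
\[
t(x)\coloneqq\inf\{n\leq\sigma_k:x\in S_n\}\in\{0,1,\ldots,\sigma_k\}\cup\{\infty\}
\]
be its saturation time. The plan is to list the sites with $t(x)<\infty$ in increasing order of $t$, and then to append the remaining sites of $V$ in any order. Ties will be broken as described below.

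Several preliminary observations are needed.
\begin{enumerate}[label=\textup{(\alph*)}]
\item $S_n\subseteq R_n$, because a site all of whose incident edges have been crossed has been visited. Hence every $S_{\sigma_j}$ with $j\leq k$ is a subset of $V$.
\item The set $E_n$ changes only at the times $\sigma_1,\sigma_2,\ldots$. So $S_n$ is constant on each interval $[\sigma_{j-1},\sigma_j-1]$, and in particular $S_{\sigma_j-1}=S_{\sigma_{j-1}}$.
\item At step $\sigma_j$ only the edge $\{v_j,w_j\}$ is added. So only $v_j$ and $w_j$ can enter $S$ at time $\sigma_j$. Both could, and this is the only possible tie in the saturation times among sites with $t<\infty$.
\end{enumerate}
When $v_j$ and $w_j$ tie at time $\sigma_j$, I break the tie by placing $v_j$ before $w_j$.

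For the bound $M\leq k+1$, note that every site of $V$ other than the origin is first reached by crossing an edge for the first time. Such a first crossing happens at one of the times $\sigma_1,\ldots,\sigma_k$, so at most $k$ new sites arise and $M\leq k+1$.

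For (i), suppose $v_j=x_i$. The edge $\{v_j,w_j\}$ is untraversed just before step $\sigma_j$. Therefore $v_j\notin S_{\sigma_j-1}=S_{\sigma_{j-1}}$, and so $t(v_j)\geq\sigma_j>\sigma_{j-1}$. Every $y\in S_{\sigma_{j-1}}$ has $t(y)\leq\sigma_{j-1}<t(v_j)$, so $y$ appears strictly before $v_j$ in the ordering. This gives $S_{\sigma_{j-1}}\subseteq A_{i-1}$.

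For (ii), suppose $w_j\in S_{\sigma_j}$ and $w_j=x_i$. By the same reasoning as in (i), $w_j\notin S_{\sigma_j-1}$, so $t(w_j)=\sigma_j$ exactly. Take any $y\in S_{\sigma_j}$.
\begin{itemize}
\item If $t(y)<\sigma_j$, then $y$ precedes $w_j$ in the ordering.
\item If $t(y)=\sigma_j$, then by (c) either $y=w_j$, or $y=v_j$, which was placed before $w_j$ by the tie-breaking rule.
\end{itemize}
In every case $y\in A_i$, so $S_{\sigma_j}\subseteq A_i$.

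Distinctness follows from the same computation. If $w_j\in S_{\sigma_j}$, then $t(w_j)=\sigma_j$. Since the times $\sigma_j$ are distinct for distinct $j$, so are the corresponding sites $w_j$.

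The only delicate point is the simultaneous saturation of both endpoints at a single step. Once observation (c) is in place, the tie-breaking rule handles it, and the rest of the argument is bookkeeping with (b).
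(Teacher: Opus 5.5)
Your proof is correct and follows essentially the paper's argument. The paper sorts $V$ by $\iota(x)$, the largest $j\leq k$ for which $x$ is an endpoint of the $j$th distinct edge; on saturated sites this satisfies $\sigma_{\iota(x)}=t(x)$, and the paper uses the same tie-break of $v_j$ before $w_j$. The differences are cosmetic: you append unsaturated sites at the end rather than interleaving them by $\iota$, and you bound $M$ by counting first arrivals rather than using connectivity of $E_{\sigma_k}$.
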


	\begin{proof}
		The edges of $E_{\sigma_k}$ form a connected subgraph with $k$ edges whose
		vertex set is $V$, so $M\leq k+1$.  Every $x\in V$ is an endpoint of some
		edge of $E_{\sigma_k}$; let $\iota(x)$ be the largest $j\leq k$ for which
		$x$ is an endpoint of the $j$th distinct edge the walk crosses.  Only the
		endpoints of that edge can have $\iota=j$, so at most two sites satisfy $\iota(x)=j$ for each
		$j$.  Order $V$ by increasing $\iota$, placing $v_j$ before $w_j$ whenever
		$\iota(v_j)=\iota(w_j)=j$.

		For (i), let $y\in S_{\sigma_{j-1}}$.  Every edge at $y$ has been crossed by
		step $\sigma_{j-1}$, so $\iota(y)\leq j-1$, whereas the $j$th distinct edge
		is incident to $v_j$ and hence $\iota(v_j)\geq j$.  Thus $y$ precedes $v_j$,
		which gives $S_{\sigma_{j-1}}\subseteq A_{i-1}$ when $v_j=x_i$.

		For (ii), suppose that $w_j\in S_{\sigma_j}$.  The $j$th distinct edge is
		incident to $w_j$ and is untraversed before step $\sigma_j$, so
		$w_j\notin S_{\sigma_{j-1}}$ and therefore $\iota(w_j)=j$.  Every
		$y\in S_{\sigma_j}$ has $\iota(y)\leq j$, and the tie-breaking rule places
		the other site with $\iota=j$, if there is one, before $w_j$.  Hence
		$S_{\sigma_j}\subseteq A_i$ when $w_j=x_i$.  Finally, distinct indices $j$
		give distinct values $\iota(w_j)=j$ and hence distinct sites $w_j$.
	\end{proof}

	\begin{proposition}[Bound on the nondecreasing process]\label{prop:charge}
		Let $d\geq2$.  There is a constant $C<\infty$, depending only on $d$, such
		that for every $\beta\geq1$, almost surely,
		\begin{equation}\label{eq:charge-bound}
			\Gamma_n\leq C\beta|E_n|^{1+1/d}
			\qquad\text{for every }n\geq0\, .
		\end{equation}
	\end{proposition}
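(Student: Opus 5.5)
Since $\Gamma$ is constant between first crossings, it suffices to prove the bound at $n=\sigma_k$ for each $k\geq1$ on $\{\sigma_k<\infty\}$; the case $k=0$ is trivial since $\Gamma_0=0$. By \eqref{eq:C-def},
\[
\Gamma_{\sigma_k}=\sum_{j=1}^k\bigl(\gamma_{\sigma_j-1}+\Phi_{\sigma_j}\bigr)
=\sum_{j=1}^k\Bigl(2d\beta\,h_{S_{\sigma_j-1}}(v_j)+2du_{S_{\sigma_j}}(w_j)\Bigr)\, .
\]
Here $\gamma_{\sigma_j-1}\neq0$ requires $\nu_{\sigma_j-1}\geq1$, which holds because an untraversed edge is crossed at step $\sigma_j$. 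Also $S_{\sigma_j-1}=S_{\sigma_{j-1}}$, since no new edge is crossed strictly between $\sigma_{j-1}$ and $\sigma_j$. Fix the ordering $x_1,\ldots,x_M$ of $V=R_{\sigma_k}$ from Lemma~\ref{lem:walk-order}, with $M\leq k+1$.

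For the departure terms, take $v_j=x_i$. Part (i) of Lemma~\ref{lem:walk-order} gives $S_{\sigma_{j-1}}\subseteq A_{i-1}$, and $x_i\notin A_{i-1}$. Domain monotonicity then yields $h_{S_{\sigma_{j-1}}}(v_j)\leq h_{A_{i-1}}(x_i)$. Any fixed site $x_i$ can be the departure site $v_j$ for at most $2d$ indices $j$, since each first crossing from $x_i$ uses a distinct incident edge. Hence
\[
\sum_j h_{S_{\sigma_{j-1}}}(v_j)\leq 2d\sum_{i=1}^M h_{A_{i-1}}(x_i)\, .
\]

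For the arrival terms, $u_{S_{\sigma_j}}(w_j)=0$ unless $w_j\in S_{\sigma_j}$. In that case, with $w_j=x_i$, part (ii) gives $S_{\sigma_j}\subseteq A_i$, so $u_{S_{\sigma_j}}(w_j)\leq u_{A_i}(x_i)$ by domain monotonicity. Because such $w_j$ are distinct, each $i$ is used at most once, giving
\[
\sum_j u_{S_{\sigma_j}}(w_j)\leq\sum_{i=1}^M u_{A_i}(x_i)\, .
\]

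Combining both bounds with Lemma~\ref{lem:insertion} and $\beta\geq1$ gives
\[
\Gamma_{\sigma_k}\leq 4d^2\beta\, CM^{1+1/d}\leq C\beta(k+1)^{1+1/d}\leq C'\beta k^{1+1/d}\, .
\]
For general $n$, let $k=|E_n|$. Then $\sigma_k\leq n<\sigma_{k+1}$ and $\Gamma_n=\Gamma_{\sigma_k}$, which proves \eqref{eq:charge-bound}.

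The main point needing care is the bookkeeping in the comparison step: checking that $S_{\sigma_j-1}$ equals $S_{\sigma_{j-1}}$, so that Lemma~\ref{lem:walk-order}(i) applies, and confirming the multiplicity counts. Once these are checked, the rest is direct.
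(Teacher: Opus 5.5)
Your proposal is correct and follows essentially the same route as the paper's proof. Both write $\Gamma_n$ as the sum of departure and arrival terms over the first crossings and order the visited sites by Lemma~\ref{lem:walk-order}. Both then use domain monotonicity together with the multiplicity counts ($2d$ per departure site, distinct arrival sites) to reduce to Lemma~\ref{lem:insertion} with $M\leq k+1$.
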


	\begin{proof}
		Fix $n$ and let $k\coloneqq|E_n|$.  If $k=0$ then $\Gamma_n=0$, so assume
		$k\geq1$; then $\sigma_k\leq n<\infty$.  No new edge is crossed between
		steps $\sigma_{j-1}$ and $\sigma_j-1$, so $S_{\sigma_j-1}=S_{\sigma_{j-1}}$,
		and $X_{\sigma_j-1}=v_j$ and $X_{\sigma_j}=w_j$.  Hence \eqref{eq:C-def},
		\eqref{eq:a-def} and \eqref{eq:F-def} give
		\[
			\Gamma_n=\sum_{j=1}^k\bigl(\gamma_{\sigma_j-1}+\Phi_{\sigma_j}\bigr)
			=2d\beta\sum_{j=1}^kh_{S_{\sigma_{j-1}}}(v_j)
			+2d\sum_{j=1}^ku_{S_{\sigma_j}}(w_j)\, .
		\]

		Take the ordering $x_1,\ldots,x_M$ of Lemma~\ref{lem:walk-order}, so that
		$M\leq k+1$.  A site occurs as $v_j$ for at most $2d$ indices $j$, because
		those indices correspond to distinct edges incident to that site.  The site $v_j$ has an
		untraversed incident edge at step $\sigma_j-1$, so
		$v_j\notin S_{\sigma_{j-1}}$, and part~(i) of the lemma together with domain
		monotonicity gives $h_{S_{\sigma_{j-1}}}(v_j)\leq h_{A_{i-1}}(x_i)$ when
		$v_j=x_i$.  The term $u_{S_{\sigma_j}}(w_j)$ vanishes unless
		$w_j\in S_{\sigma_j}$, and in that case part~(ii) and domain monotonicity
		give $u_{S_{\sigma_j}}(w_j)\leq u_{A_i}(x_i)$ at indices $i$ that are
		distinct for distinct $j$.  Using $\beta\geq1$ and \eqref{eq:insertion-sum},
		\[
			\Gamma_n\leq4d^2\beta\sum_{i=1}^M\bigl(h_{A_{i-1}}(x_i)+u_{A_i}(x_i)\bigr)
			\leq C\beta M^{1+1/d}\leq C\beta(2k)^{1+1/d}\, ,
		\]
		which is \eqref{eq:charge-bound} for a suitable $C$.
	\end{proof}

	\section{The expected range}\label{sec:range}
	\begin{proof}[Proof of Theorem~\ref{thm:main}]
		For integers $k\geq1$ and $N\geq0$, apply optional stopping to
		Lemma~\ref{lem:martingale} at $\sigma_k\wedge N$.  Since $\Phi\geq0$ and
		$|E_{\sigma_k\wedge N}|\leq k$, Proposition~\ref{prop:charge} gives
		$\E_\beta[\sigma_k\wedge N]\leq C\beta k^{1+1/d}$.
		Monotone convergence as $N\to\infty$ gives
		\begin{equation}\label{eq:sigma-mean}
			\E_\beta\sigma_k\leq C\beta k^{1+1/d}\, .
		\end{equation}

		Set $a\coloneqq(n/(2C\beta))^{\frac{d}{d+1}}$.
		If $a\geq1$, take $k=\lfloor a\rfloor\geq a/2$.
		Markov's inequality and \eqref{eq:sigma-mean} give
		$\P_\beta\{\sigma_k\leq n\}\geq1/2$.
		The degree bound gives
		\begin{equation}\label{eq:edges-sites}
			|E_n|\leq d|R_n|\, .
		\end{equation}
		Together with \eqref{eq:sigma-range}, this gives
		\[
		\E_\beta|R_n|\geq\frac{k}{2d}\geq\frac a{4d}
		\geq C^{-1}\beta^{-\frac{d}{d+1}}n^{\frac{d}{d+1}}\, .
		\]
		For $a<1$, use $|R_n|\geq1$ and increase $C$.
	\end{proof}

	\section{A stretched-exponential tail}\label{sec:tail}

	An exponential supermartingale strengthens the expected-time estimate
	\eqref{eq:sigma-mean} to a tail bound for $\sigma_k$.

	\begin{lemma}[An exponential-moment bound]\label{lem:exp}
		Let $d\geq1$ be an integer and let $\beta\geq1$.  Let
		$(\mathcal F_n)_{n\geq0}$ be the filtration generated by once-reinforced
		random walk on $\Z^d$ with reinforcement $\beta$.  Let $(\Phi_n)_{n\geq0}$ and
		$(\Gamma_n)_{n\geq0}$ be adapted processes with $\Phi_0=\Gamma_0=0$, and let
		$\zeta$ be a stopping time for this filtration.  Suppose that
		$(n\wedge\zeta+\Phi_{n\wedge\zeta}-\Gamma_{n\wedge\zeta})_{n\geq0}$ is a
		supermartingale.
		Suppose that $p$, $q$ and $b$ are finite constants with
		\[
			0\leq\Phi_{n\wedge\zeta}\leq p,
			\qquad
			0\leq\Gamma_{(n+1)\wedge\zeta}-\Gamma_{n\wedge\zeta}\leq q,
			\qquad
			\Gamma_{n\wedge\zeta}\leq b\, ,
		\]
		for every $n\geq0$, and let
		$\theta\coloneqq\bigl[4(1+p+q)\bigr]^{-1}$.  Then for every integer $N\geq0$,
		\[
			\E_\beta e^{\theta(N\wedge\zeta)/2}\leq e^{2\theta b},
			\qquad\text{and hence}\qquad
			\P_\beta\{N\wedge\zeta\geq4b+s\}\leq e^{-\theta s/2}
			\quad\text{for every }s\geq0\, .
		\]
	\end{lemma}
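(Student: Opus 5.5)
We will build an exponential supermartingale out of the process $Z_n\coloneqq n\wedge\zeta+\Phi_{n\wedge\zeta}-\Gamma_{n\wedge\zeta}$, whose increments are controlled by the constants $p$ and $q$. Write $D_{n+1}\coloneqq Z_{n+1}-Z_n$. Since $\Phi$ takes values in $[0,p]$ and $\Gamma$ increases by at most $q$ per step, we get $|D_{n+1}|\leq1+p+q$. Hence $\theta|D_{n+1}|\leq1/4$. Also $\E_\beta[D_{n+1}\mid\mathcal F_n]\leq0$ by the supermartingale hypothesis. The elementary inequality $e^x\leq1+x+x^2$ for $|x|\leq1$ gives
\[
\E_\beta\bigl[e^{\theta D_{n+1}}\mid\mathcal F_n\bigr]\leq1+\theta^2\E_\beta[D_{n+1}^2\mid\mathcal F_n]\, .
\]
We need to compensate this second-order term, and that is the main obstacle. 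We would like to compare it to the drift, which is at least the time increment. The crude bound $D^2\leq(1+p+q)|D|$ does not immediately help, because $|D|$ is not controlled by the time increment.

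The trick we would try first is to separate the increasing part. Write $D_{n+1}=\Delta_{n+1}-(\Gamma_{(n+1)\wedge\zeta}-\Gamma_{n\wedge\zeta})$, where $\Delta_{n+1}$ is the increment of $n\wedge\zeta+\Phi_{n\wedge\zeta}$. Then consider
\[
W_n\coloneqq\exp\bigl(\theta Z_n-\theta(n\wedge\zeta)/2-\theta\Gamma_{n\wedge\zeta}\bigr)\, ,
\]
which amounts to using the exponent $\theta(\tfrac12(n\wedge\zeta)+\Phi-2\Gamma)$. We would check that $W$ is a supermartingale. Let $g$ denote the $\Gamma$-increment and $t\in\{0,1\}$ the time increment. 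The exponent increment is $\theta(D-t/2-g)$. Using $e^x\leq1+x+x^2$, it suffices to show
\[
\E\bigl[D-t/2-g+\theta(D-t/2-g)^2\mid\mathcal F_n\bigr]\leq0\, .
\]
To get this, we would bound $\theta(D-t/2-g)^2\leq\tfrac14|D-t/2-g|$ and handle the positive part. The positive part of $D$ can only come from the $\Phi$ increment. On $\{n<\zeta\}$ the supermartingale property gives $\E[\Phi_{n+1}-\Phi_n\mid\mathcal F_n]\leq\E[g\mid\mathcal F_n]-1$. From this, $\E[|D|]\leq2\E[g]+\dots$ should follow, and the terms $-t/2-g$ should absorb the quadratic error.

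This bookkeeping, controlling $\E|D|$ by $\E g$ plus the unit time step, is where we expect to spend the effort. If it fails with these exact exponents, we would adjust the coefficient of $\Gamma$. The constants in the statement, namely $1/2$ on time and $2\theta b$ in the bound, suggest weight $2$ on $\Gamma$ and $1/2$ on time, exactly as above.

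Granting that $W$ is a supermartingale with $W_0=1$, we get $\E W_N\leq1$. Since $\Phi\geq0$ and $\Gamma_{N\wedge\zeta}\leq b$, we have
\[
\theta(N\wedge\zeta)/2=\log W_N-\theta\Phi_{N\wedge\zeta}+2\theta\Gamma_{N\wedge\zeta}\leq\log W_N+2\theta b\, .
\]
This gives $\E_\beta e^{\theta(N\wedge\zeta)/2}\leq e^{2\theta b}$. Markov's inequality applied to $e^{\theta(N\wedge\zeta)/2}$ at level $e^{\theta(4b+s)/2}$ then gives
\[
\P_\beta\{N\wedge\zeta\geq4b+s\}\leq e^{2\theta b-2\theta b-\theta s/2}=e^{-\theta s/2}\, ,
\]
as claimed.
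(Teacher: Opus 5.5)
There is a genuine gap. Your process $W_n=\exp\bigl(\theta\bigl(\tfrac12(n\wedge\zeta)+\Phi_{n\wedge\zeta}-2\Gamma_{n\wedge\zeta}\bigr)\bigr)$ is not a supermartingale under the hypotheses, so the one-step verification you left as ``bookkeeping'' cannot be completed. The hypotheses control only the conditional mean of a $\Phi$-increment, while $e^{\theta\Phi}$ is convex in $\Phi$. On a step where $\Gamma$ does not move and $\Phi$ jumps by order $p$ in either direction, convexity gives a second-order gain of order $\theta^2p^2$. That gain is of order one when $q$ is comparable to $p$, whereas the time term supplies only $\theta/2$.

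Concretely, take $d=1$, $\beta=1$, $\zeta=2$, $\Phi_0=\Gamma_0=0$, $\Gamma_n=p/2+1$ for $n\geq1$, $\Phi_1=p/2$, and let $\Phi_2\in\{0,p-2\}$ according to the sign of $X_2-X_1$. With $q=b=p/2+1$, the stopped process $n\wedge\zeta+\Phi_{n\wedge\zeta}-\Gamma_{n\wedge\zeta}$ is a martingale and all the bounds hold. Yet
\[
\E\bigl[W_2\mid\mathcal F_1\bigr]/W_1=e^{-\theta/2}\cosh\bigl(\theta(p/2-1)\bigr)\, ,
\]
and this tends to $\cosh(1/12)>1$ as $p\to\infty$, because $\theta=(8+6p)^{-1}$. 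The estimate $\E|D|\lesssim\E g+t$ you hoped for also fails here: $g=0$ and $t=1$, but $|D|=p/2-1$. Changing the coefficient of $\Gamma$ cannot help, since $\Gamma$ is constant on the offending step.

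The missing idea is to make the functional linear in $\Phi$, so that only the conditional mean of $\Phi_{n+1}$ ever enters. The paper uses
\[
Z_n=e^{\theta(n\wedge\zeta)/2-2\theta\Gamma_{n\wedge\zeta}}\bigl(1+\theta\Phi_{n\wedge\zeta}\bigr)\, .
\]
Let $\delta_n$ be the $\Gamma$-increment. Since $0\leq\theta\delta_n\leq1/4$, we have $e^{-2\theta\delta_n}\leq1-\theta\delta_n$, and together with $\delta_n,\Phi\geq0$ this gives
\[
e^{-2\theta\delta_n}\bigl(1+\theta\Phi_{(n+1)\wedge\zeta}\bigr)\leq1+\theta\Phi_{(n+1)\wedge\zeta}-\theta\delta_n\, .
\]
By the hypothesis alone, with no second-order term, the conditional expectation of the right side is at most $1+\theta\Phi_{n\wedge\zeta}-\theta\one_{\{n<\zeta\}}$. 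Since $1+\theta\Phi\leq2$, this is at most $e^{-\theta\one_{\{n<\zeta\}}/2}\bigl(1+\theta\Phi_{n\wedge\zeta}\bigr)$, so $Z$ is a supermartingale. Your final step then goes through unchanged with $Z$ in place of $W$, using $Z_N\geq e^{\theta(N\wedge\zeta)/2-2\theta b}$.
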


	\begin{proof}
		Write $\delta_n\coloneqq\Gamma_{(n+1)\wedge\zeta}-\Gamma_{n\wedge\zeta}$
		and set
		\[
		Z_n\coloneqq e^{\theta(n\wedge\zeta)/2-2\theta\Gamma_{n\wedge\zeta}}
		\bigl(1+\theta\Phi_{n\wedge\zeta}\bigr)\, .
		\]
		We show that $(Z_n)$ is a supermartingale.
		Since $0\leq\theta\delta_n\leq1/4$, the inequality
		$e^{-2x}\leq1-x$ for $0\leq x\leq1/2$ and the assumed supermartingale
		property give
		\begin{align*}
		\E_\beta\bigl[e^{-2\theta\delta_n}
		(1+\theta\Phi_{(n+1)\wedge\zeta})\mid\mathcal F_n\bigr]
		&\leq1+\theta\Phi_{n\wedge\zeta}-\theta\one_{\{n<\zeta\}}\\
		&\leq e^{-\theta\one_{\{n<\zeta\}}/2}
		(1+\theta\Phi_{n\wedge\zeta})\, .
		\end{align*}
		The last inequality uses $1+\theta\Phi_{n\wedge\zeta}\leq2$ and
		$1-x\leq e^{-x}$.  Since
		$((n+1)\wedge\zeta)-(n\wedge\zeta)=\one_{\{n<\zeta\}}$, it follows that
		$\E_\beta[Z_{n+1}\mid\mathcal F_n]\leq Z_n$.
		Now $Z_0=1$, $\Phi_{N\wedge\zeta}\geq0$, and $\Gamma_{N\wedge\zeta}\leq b$ give
		$\E_\beta e^{\theta(N\wedge\zeta)/2}\leq e^{2\theta b}$.
		The tail bound follows by Markov's inequality.
	\end{proof}

	\begin{proposition}[Tail of $\sigma_k$]\label{prop:tail}
		Let $d\geq2$. There is a constant $C\geq1$, depending only on $d$, such
		that for every $\beta\geq1$ and every integer $k\geq1$,
		\[
			\P_\beta\bigl\{\sigma_k>8C\beta k^{1+1/d}\bigr\}
			\leq\exp\bigl(-C^{-1}k^{(d-1)/d}\bigr)\, .
		\]
	\end{proposition}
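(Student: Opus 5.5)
We apply Lemma~\ref{lem:exp} with the potential $\Phi_n$ of \eqref{eq:F-def}, the process $\Gamma_n$ of \eqref{eq:C-def}, and the stopping time $\zeta\coloneqq\sigma_k$. The process $n+\Phi_n-\Gamma_n$ is a supermartingale by Lemma~\ref{lem:martingale}. Stopping a supermartingale at a stopping time preserves that property, so the stopped process in Lemma~\ref{lem:exp} is a supermartingale.

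It remains to produce the constants $p$, $q$, $b$, each depending only on $d$, $\beta$ and $k$.

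\textbf{The bound $b$.} For $n\le\sigma_k$ we have $|E_{n\wedge\sigma_k}|\le k$. Proposition~\ref{prop:charge} then gives $b=C\beta k^{1+1/d}$.

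\textbf{The bound $p$.} Before time $\sigma_k$ at most $k$ edges are traversed. Every site of $S_n$ has all $2d$ incident edges in $E_n$, so $|S_n|\le k$. Lemma~\ref{lem:max-exit} then gives $\Phi_{n\wedge\sigma_k}\le 2d\,C k^{2/d}$, so $p=Ck^{2/d}$.

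\textbf{The bound $q$.} An increment of $\Gamma$ has the form $\gamma_{\sigma_j-1}+\Phi_{\sigma_j}$ with $j\le k$. Here $\gamma=2d\beta h_{S}(v_j)$ with $|S|\le k$, and $\Phi_{\sigma_j}=2du_{S_{\sigma_j}}(w_j)$ with $|S_{\sigma_j}|\le k$. Lemma~\ref{lem:max-exit} bounds both terms, giving $q=C\beta k^{2/d}$. Hence $\theta=[4(1+p+q)]^{-1}\ge (C\beta k^{2/d})^{-1}$.

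\textbf{Applying the lemma.} Lemma~\ref{lem:exp} with $s=4b$ gives
\[
\P_\beta\{N\wedge\sigma_k\ge 8b\}\le e^{-2\theta b}.
\]
The exponent satisfies
\[
2\theta b\ \ge\ \frac{2C\beta k^{1+1/d}}{C'\beta k^{2/d}}\ =\ C^{-1}k^{1-1/d}.
\]
The factor $\beta$ cancels, which is why the bound is uniform in $\beta$. Note that $1-1/d=(d-1)/d$, the claimed exponent.

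Finally, let $N\to\infty$. The events $\{N\wedge\sigma_k\ge 8b\}$ increase to $\{\sigma_k\ge 8b\}$, which includes the case $\sigma_k=\infty$. Hence $\P_\beta\{\sigma_k>8C\beta k^{1+1/d}\}\le\exp(-C^{-1}k^{(d-1)/d})$, after renaming constants so that the same $C$ appears in both places.

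\textbf{Main obstacle.} The only delicate point is the uniform bound $q$ on a single increment of $\Gamma$. It relies on $h_S$ and $u_S$ being controlled by $|S|^{2/d}$ with $|S|\le k$, and one must check that the sets appearing at time $\sigma_j$, including the possibly enlarged $S_{\sigma_j}$, still have size at most $k$. This holds because $|E_{\sigma_j}|\le k$ forces $|S_{\sigma_j}|\le |E_{\sigma_j}|\le k$.
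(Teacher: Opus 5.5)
Your proof is correct and follows the paper's argument. You apply Lemma~\ref{lem:exp} with $\zeta=\sigma_k$, bound $p$ and $q$ via Lemma~\ref{lem:max-exit} and $b$ via Proposition~\ref{prop:charge}, and take $s=4b$ so that $\beta$ cancels in $2\theta b$. The only differences are cosmetic and do not affect the result: you use $|S_n|\le k$ where the paper uses the sharper $|S_n|\le k/d$, and you let $N\to\infty$ by continuity of measure where the paper fixes a single integer $N>8b$.
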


	\begin{proof}
		Apply Lemma~\ref{lem:exp} with $\zeta=\sigma_k$ to the supermartingale
		of Lemma~\ref{lem:martingale}.  For $n\leq\sigma_k$, we have $|S_n|\leq k/d$.
		Lemma~\ref{lem:max-exit} and \eqref{eq:C-def} therefore bound
		$\Phi_{n\wedge\sigma_k}$ by $Ck^{2/d}$ and
		$\Gamma_{(n+1)\wedge\sigma_k}-\Gamma_{n\wedge\sigma_k}$ by $2C\beta k^{2/d}$.
		Choose $C\geq8$ large enough for these bounds and
		Proposition~\ref{prop:charge}. In Lemma~\ref{lem:exp}, take
		\[
		p=Ck^{2/d},\qquad q=2C\beta k^{2/d},\qquad
		b=C\beta k^{1+1/d}\, .
		\]
		Then $\theta=[4(1+p+q)]^{-1}\geq1/(16C\beta k^{2/d})$.
		Taking $s=4b$ and an integer $N>8b$ gives
		\[
		\P_\beta\{\sigma_k>8b\}
		\leq\P_\beta\{N\wedge\sigma_k\geq8b\}
		\leq e^{-2\theta b}\leq\exp\bigl(-k^{(d-1)/d}/8\bigr)
		\leq\exp\bigl(-C^{-1}k^{(d-1)/d}\bigr)\, .
		\]
	\end{proof}

	\begin{proof}[Proof of Theorem~\ref{thm:whp}]
		Suppose that $n\geq16C\beta$, and let
		\[
		k\coloneqq\lfloor(n/(8C\beta))^{\frac{d}{d+1}}\rfloor\, .
		\]
		Then $(n/(8C\beta))^{\frac{d}{d+1}}\geq2^{\frac{d}{d+1}}\geq1$, so $k\geq1$ and
		\[
		k\geq\tfrac12(n/(8C\beta))^{\frac{d}{d+1}},
		\qquad 8C\beta k^{1+1/d}\leq n\, .
		\]
		By \eqref{eq:sigma-range} and
		Proposition~\ref{prop:tail},
		\[
			\P_\beta\{|E_n|<k\}
			=\P_\beta\{\sigma_k>n\}
			\leq\P_\beta\bigl\{\sigma_k>8C\beta k^{1+1/d}\bigr\}
			\leq\exp\bigl(-C^{-1}k^{(d-1)/d}\bigr)\, .
		\]
		On $\{|E_n|\geq k\}$, equation~\eqref{eq:edges-sites} gives
		\[
		|R_n|\geq k/d\geq C^{-1}(n/\beta)^{\frac{d}{d+1}},
		\qquad k^{(d-1)/d}\geq C^{-1}(n/\beta)^{(d-1)/(d+1)}\, .
		\]
		The theorem follows after
		enlarging $C$.
	\end{proof}

	\section{Displacement and exit times}\label{sec:displacement}

	Let $B_r\coloneqq[-r,r]^d\cap\Z^d$ and let
	$\tau_{B_r}\coloneqq\inf\{n\geq0:X_n\notin B_r\}$ be its exit time.
	Define the maximal displacement by
	$\Lambda_n\coloneqq\max_{0\leq k\leq n}\|X_k\|_\infty$.
	Since $R_n\subseteq B_{\Lambda_n}$,
	\begin{equation}\label{eq:range-box}
		|R_n|\leq(2\Lambda_n+1)^d,
		\qquad\text{equivalently}\qquad
		\Lambda_n\geq\tfrac12\bigl(|R_n|^{1/d}-1\bigr)\, .
	\end{equation}

	\begin{proposition}[Displacement and exit times]\label{prop:displacement}
		Let $d\geq2$.  There is a constant $C\geq1$, depending only
		on $d$, such that for every $\beta\geq1$,
		\[
		\E_\beta\tau_{B_r}\leq C\beta(2r+1)^{d+1}
		\qquad\text{for every integer }r\geq0\, ,
		\]
		and
		\[
		\E_\beta\Lambda_n\geq C^{-1}\beta^{-1/(d+1)}n^{1/(d+1)}
		\qquad\text{for every }n\geq0\, .
		\]
		Moreover, $\P_\beta$-almost surely, \[
		\liminf_{n\to\infty}\Lambda_nn^{-1/(d+1)}\geq C^{-1}\beta^{-1/(d+1)}\, . \]
	\end{proposition}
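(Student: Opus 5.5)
The plan is to derive all three statements from the edge-crossing time estimates already proved: the expected-time bound \eqref{eq:sigma-mean}, the tail bound of Proposition~\ref{prop:tail} (through Theorem~\ref{thm:whp}), and the box inequality \eqref{eq:range-box}.

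\textbf{Exit-time bound.} The walk starts at the origin, and before time $\tau_{B_r}$ every step stays in $B_r$. Hence every edge crossed before $\tau_{B_r}$ has both endpoints in $B_r$. The number of edges with both endpoints in $B_r$ is at most $d(2r+1)^d$, so $|E_{\tau_{B_r}-1}|\leq d(2r+1)^d$. The step at time $\tau_{B_r}$ crosses an edge leaving $B_r$, which is new. So with $k\coloneqq d(2r+1)^d+1$ we get $|E_{\tau_{B_r}}|\leq k$ and $\tau_{B_r}\leq\sigma_k$, and I will verify the latter carefully. Before $\tau_{B_r}$ only edges inside $B_r$ are crossed, so $|E_n|\leq d(2r+1)^d<k$ for $n<\tau_{B_r}$, which forces $\sigma_k\geq\tau_{B_r}$. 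That inequality is the right direction: $\tau_{B_r}\leq\sigma_k$ on every realization. Then \eqref{eq:sigma-mean} gives
\[
\E_\beta\tau_{B_r}\leq C\beta k^{1+1/d}\leq C\beta(2r+1)^{d+1},
\]
after enlarging $C$, since $k\leq 2d(2r+1)^d$. The same bound implies that $\sigma_k<\infty$ almost surely.

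\textbf{Expected displacement.} From \eqref{eq:range-box} we have $\Lambda_n\geq\frac12(|R_n|^{1/d}-1)$. Lower bounds on $\E|R_n|$ alone do not control $\E|R_n|^{1/d}$ by concavity in the right direction, so I will use a probability bound instead. The proof of Theorem~\ref{thm:main} showed $\P_\beta\{|R_n|\geq k/d\}\geq1/2$ with $k\geq C^{-1}(n/\beta)^{d/(d+1)}$. On that event $\Lambda_n\geq\frac12((k/d)^{1/d}-1)$, so
\[
\E_\beta\Lambda_n\geq\tfrac14\bigl(C^{-1}(n/\beta)^{1/(d+1)}-1\bigr).
\]
When $(n/\beta)^{1/(d+1)}$ exceeds a large constant, this gives the claim. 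Otherwise I will use $\Lambda_n\geq1$ for $n\geq1$ and enlarge $C$; the case $n=0$ is trivial because the right-hand side vanishes. Alternatively, Markov's inequality applied to $\tau_{B_r}$ with $r$ of order $(n/\beta)^{1/(d+1)}$ gives the same result directly: $\P\{\tau_{B_r}>n\}\leq1/2$ implies $\Lambda_n>r$ with probability at least $1/2$.

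\textbf{Almost sure bound.} For fixed $\beta$ and $n\geq C\beta$, Theorem~\ref{thm:whp} gives
\[
\P_\beta\bigl\{|R_n|<C^{-1}(n/\beta)^{d/(d+1)}\bigr\}\leq\exp\bigl(-C^{-1}(n/\beta)^{(d-1)/(d+1)}\bigr).
\]
This is summable in $n$ because $d\geq2$. By Borel--Cantelli, almost surely $|R_n|\geq C^{-1}(n/\beta)^{d/(d+1)}$ for all large $n$. Then \eqref{eq:range-box} gives
\[
\Lambda_n\geq\tfrac12\bigl(C^{-1/d}(n/\beta)^{1/(d+1)}-1\bigr),
\]
and dividing by $n^{1/(d+1)}$ and taking $\liminf$ removes the $-1$. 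This yields $\liminf_{n\to\infty}\Lambda_nn^{-1/(d+1)}\geq C^{-1}\beta^{-1/(d+1)}$.

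The main obstacle is bookkeeping rather than substance. The key step is the pathwise comparison $\tau_{B_r}\leq\sigma_k$, and after that the work is handling small $n/\beta$ in the expectation bound by adjusting constants.
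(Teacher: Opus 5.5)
Your proof is correct and follows the paper's argument. It uses the pathwise comparison $\tau_{B_r}\leq\sigma_k$ with $k=d(2r+1)^d+1$ together with \eqref{eq:sigma-mean}, then the event $\{|R_n|\geq k/d\}$ of probability at least $1/2$ from the proof of Theorem~\ref{thm:main} combined with \eqref{eq:range-box} and $\Lambda_n\geq1$ for small $n/\beta$, and finally Borel--Cantelli applied to Theorem~\ref{thm:whp} for the almost sure bound; your alternative via Markov's inequality for $\tau_{B_r}$ also works and is a slightly more direct route to the expectation bound.
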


	\begin{proof}
		The box $B_r$ contains at most $d(2r+1)^d$ edges, so
		$\tau_{B_r}\leq\sigma_{k_r}$ for $k_r=d(2r+1)^d+1$.
		Equation~\eqref{eq:sigma-mean} gives
		\[
		\E_\beta\tau_{B_r}\leq C\beta k_r^{1+1/d}
		\leq C\beta(2r+1)^{d+1}\, .
		\]

		For the expected maximal displacement, set
		$a=(n/(2C\beta))^{\frac{d}{d+1}}$ and $k=\lfloor a\rfloor$.
		If $a\geq2^{d+1}d$, the proof of Theorem~\ref{thm:main} gives
		$\P_\beta\{|R_n|\geq k/d\}\geq1/2$ and $k\geq a/2$.
		On this event, \eqref{eq:range-box} gives
		$\Lambda_n\geq\tfrac14(k/d)^{1/d}$, and hence
		\[
		\E_\beta\Lambda_n\geq\tfrac18(a/(2d))^{1/d}
		\geq C^{-1}\beta^{-1/(d+1)}n^{1/(d+1)}\, .
		\]
		For $a<2^{d+1}d$, use $\Lambda_n\geq1$ when $n\geq1$ and increase $C$;
		at $n=0$ both sides vanish.
		The almost sure bound follows from \eqref{eq:range-box} and the almost
		sure range bound in Section~\ref{ssec:main-results}.
	\end{proof}

	\section{Optimality of the exponents}\label{sec:sharp}

	Taking $\beta=n$ keeps the expected range bounded as $n\to\infty$.
	Consequently, a lower bound uniform in $\beta$ cannot have a larger power
	of $n$ with the power of $\beta$ fixed, or a larger power of $\beta$ with
	the power of $n$ fixed. Proposition~\ref{prop:beta-sharp} gives the precise
	statement.

	\begin{proposition}[The exponents are optimal]\label{prop:beta-sharp} Let
		$d\geq2$. For every $\beta\geq1$ and every $n\geq1$,
		\begin{equation}\label{eq:range-upper}
			\E_\beta|R_n|\leq2+2dn/\beta\, .
		\end{equation}
		Consequently, if the real numbers $\gamma$ and $c>0$ satisfy
		$\E_\beta|R_n|\geq c\beta^{-\gamma}n^{\frac{d}{d+1}}$ for all $\beta\geq1$ and all
		$n\geq1$, then $\gamma\geq d/(d+1)$. If the real numbers $s$ and $c>0$ satisfy
		$\E_\beta|R_n|\geq c\beta^{-\frac{d}{d+1}}n^{s}$ for all $\beta\geq1$ and all
		$n\geq1$, then $s\leq d/(d+1)$.
	\end{proposition}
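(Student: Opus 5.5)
The plan is to bound the expected number of first crossings, since every new site other than the origin is entered through a newly crossed edge, so that $|R_n|\leq1+|E_n|$. It remains to show $\E_\beta|E_n|\leq1+2dn/\beta$ or similar. Write $|E_n|=\sum_{m=1}^n\eta_m$ and compute $\E_\beta[\eta_{m+1}\mid\mathcal F_m]=\nu_m/\mu_m$, the conditional probability that step $m+1$ crosses an untraversed edge.

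The key point is the very first step from a site. When $|E_m|\geq1$, the current site $X_m$ has already been visited. It is either the origin after time zero, or it was entered through a traversed edge. In either case, for $m\geq1$, the edge $\{X_{m-1},X_m\}$ lies in $E_m$, so $X_m$ has at least one traversed incident edge. Thus $2d-\nu_m\geq1$, which gives $\mu_m\geq\beta$ and $\nu_m/\mu_m\leq(2d-1)/\beta\leq 2d/\beta$. Only the first step, from time $0$, has probability one of a first crossing. Summing gives
\[
\E_\beta|E_n|\leq1+\sum_{m=1}^{n-1}\frac{2d}{\beta}\leq1+\frac{2dn}{\beta},
\]
and hence $\E_\beta|R_n|\leq2+2dn/\beta$, which is \eqref{eq:range-upper}. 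No step here is a real obstacle. The only care needed is the case $m=0$, handled by the separate $+1$, and the observation that $X_m$ always has a traversed edge for $m\geq1$.

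For the consequences, take $\beta=n$, so that \eqref{eq:range-upper} gives $\E_n|R_n|\leq2+2d$. If $\E_\beta|R_n|\geq c\beta^{-\gamma}n^{\frac{d}{d+1}}$, then $c\,n^{\frac{d}{d+1}-\gamma}\leq2+2d$ for all $n$. Letting $n\to\infty$ forces $\gamma\geq d/(d+1)$. In the second statement, the same choice $\beta=n$ gives $c\,n^{s-\frac{d}{d+1}}\leq2+2d$ for all $n$, so $s\leq d/(d+1)$.
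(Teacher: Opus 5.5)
Your proposal is correct and follows essentially the same route as the paper. Both proofs use $|R_n|\leq1+|E_n|$, bound the first-crossing probability after the first step by $2d/\beta$ because the current site already has a traversed incident edge, and take $\beta=n$ for the exponent restrictions. The only difference is that the paper writes the intermediate bound as $(2d-1)/(\beta+2d-1)$ where you write $(2d-1)/\beta$, which does not matter here.
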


	\begin{proof}
		Connectedness gives $|R_n|\leq1+|E_n|$.  After the first step, at least
		one incident edge has been crossed, so the conditional probability of
		crossing an untraversed edge is at most
		$(2d-1)/(\beta+2d-1)\leq2d/\beta$.
		Thus $\E_\beta|E_n|\leq1+2dn/\beta$, proving \eqref{eq:range-upper}.

		For the restrictions on the exponents, take $\beta=n\to\infty$.
		Equation~\eqref{eq:range-upper} bounds $\E_\beta|R_n|$ uniformly in $n$,
		whereas the two hypotheses give lower bounds
		$cn^{d/(d+1)-\gamma}$ and $cn^{s-d/(d+1)}$, respectively.
		In each case, the exponent must be nonpositive.
	\end{proof}

	After its first step, the walk crosses
	the same edge at each of the next $n-1$ steps with probability
	$(\beta/(\beta+2d-1))^{n-1}$, so $\P_\beta\{|R_n|=2\}\to1$ whenever
	$n/\beta\to0$.

	\subsection{Dimension one}\label{ssec:d1}

	In dimension one, the expected range is asymptotic to a multiple of
	$\sqrt n$ as $n\to\infty$ for each fixed $\beta$. That multiple is asymptotic to
	$\sqrt{2/\beta}$ as $\beta\to\infty$.

	\citet[Theorem~1]{PfaffelhuberStiefel2021} computed the asymptotics of every
	moment of the range on $\Z$. In their notation the weight of a traversed
	edge is our $\beta$, and the quantity they call the range
	is $\max_{k\leq n}X_k-\min_{k\leq n}X_k$, which is $|R_n|-1$ here. Their
	first-moment formula gives
	\[
	\lim_{n\to\infty}\frac{\E_\beta|R_n|}{\sqrt n}
	=\sqrt{2/\pi}\int_0^\infty\operatorname{sech}^{2\beta}(x/2)dx
	=\sqrt2\frac{\Gamma(\beta)}{\Gamma(\beta+1/2)}
	\sim\sqrt{2/\beta}
	\qquad(\beta\to\infty)\, .
	\]
	Thus the exponent of $n$ is $\frac12$ for each fixed $\beta$, and the leading
	coefficient has order $\beta^{-1/2}$ as $\beta\to\infty$.
	Both agree with the exponents in \eqref{eq:main-range} at $d=1$.

	\section{Return probabilities}\label{sec:return}

	We bound the expected number of visits to each site by $C\beta n^{1/(d+1)}$
	before time $n$, uniformly over all sites.
	The argument is independent of the range estimates. We use the notation
	$\mu_j$, $E_j$, and $\eta_{j+1}$ from Section~\ref{sec:potential}.

	We evaluate a potential for simple random walk along the reinforced walk.
	Reinforcement contributes $(\beta-1)$ times a sum of potential differences over
	distinct edges. The absolute value of this sum is bounded by the total
	variation of the potential over the edges of $\Z^d$. Truncating the Green
	function of the lazy walk after $R$ steps gives a potential with finite total
	variation. Optimizing over $R$ gives the exponent $1/(d+1)$.

	Let $P$ be the transition operator of simple random walk on $\Z^d$, let
	$Q\coloneqq\frac12(I+P)$ be its lazy version, and for an integer $R\geq1$ let
	\[
		g_R\coloneqq\sum_{r=0}^{R-1}Q^r(0,\cdot)
		\qquad\text{and}\qquad
		\operatorname{osc}(g_R)\coloneqq\max g_R-\min g_R\, .
	\]
	Since $Q-I=\frac12(P-I)$, the sum telescopes:
	\begin{equation}\label{eq:poisson}
		(P-I)(-g_R)=2\bigl(\one_{\{0\}}-Q^R(0,\cdot)\bigr)\, .
	\end{equation}

	\begin{lemma}[Truncated Green function]\label{lem:green}
		Let $d\geq2$ and let $R\geq1$ be an integer.  Then
		\[
			\begin{gathered}
			\|Q^R(0,\cdot)\|_\infty\leq CR^{-d/2},
			\qquad
			\operatorname{osc}(g_R)\leq
			\begin{cases}
				C\log(R+1),&d=2,\\
				C,&d\geq3,
			\end{cases}
			\\
			\sum_{\{x,y\}}|g_R(y)-g_R(x)|\leq4d^{3/2}\sqrt R\, ,
			\end{gathered}
		\]
		the last sum running over the undirected edges of $\Z^d$.
	\end{lemma}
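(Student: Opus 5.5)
The plan is to prove the three bounds in order, each from standard heat-kernel facts for the lazy walk $Q$.

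\emph{Sup-norm bound.} The lazy walk is aperiodic, so the local limit theorem, or equivalently the on-diagonal bound derived from the Nash inequality already used in Lemma~\ref{lem:max-exit}, gives $\sup_x Q^R(0,x)\leq CR^{-d/2}$. Concretely, writing $Q^R(0,x)=\sum_m\binom Rm2^{-R}P^m(0,x)$, one has $P^m(0,x)\leq Cm^{-d/2}$ for $m\geq1$. The binomial weights concentrate at $m\approx R/2$, with exponentially small mass near $m\leq R/4$. This proves the first bound.

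\emph{Oscillation bound.} Since $g_R\geq0$, it suffices to bound $\max g_R$. The maximum is attained at $0$, because $Q^r(0,x)\leq Q^r(0,0)$ for the symmetric positive-semidefinite operator $Q$: by Cauchy--Schwarz, $Q^{2s}(0,x)\leq Q^{2s}(0,0)$, and odd powers are handled similarly, or simply by the first bound. Summing the first bound over $r$ gives
\[
g_R(0)\leq1+C\sum_{r=1}^{R-1}r^{-d/2},
\]
which is $C\log(R+1)$ when $d=2$ and $C$ when $d\geq3$.

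\emph{Total variation.} This is the main point, and the constant $4d^{3/2}\sqrt R$ suggests using Cauchy--Schwarz together with the Dirichlet form. Write $g=g_R$ and let $N$ be the number of edges on which $g$ differs. That set is infinite, but $g$ is supported in the $\ell^1$-ball of radius $R-1$, which is not helpful directly. Instead I would bound each time step separately:
\[
\sum_{\{x,y\}}|g(y)-g(x)|\leq\sum_{r=0}^{R-1}\sum_{\{x,y\}}|Q^r(0,y)-Q^r(0,x)|.
\]
Summing over edges as $\frac12\sum_x\sum_{e}$, where $e$ runs over the $d$ coordinate directions with both signs, gives for each $r$
\[
\sum_{\{x,y\}}|Q^r(0,y)-Q^r(0,x)|=\sum_{i=1}^d\|Q^r(0,\cdot+e_i)-Q^r(0,\cdot)\|_{1}.
\]
That is, $d$ times the total variation distance between the lazy walk at time $r$ and its translate by $e_i$, up to a factor of $2$.

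The standard coupling estimate is that a one-step translate of the lazy walk at time $r$ differs in $\ell^1$ by at most $C/\sqrt r$. To get the explicit constant, I would use the $i$-th coordinate: it moves with probability $1/(2d)$ per step, each move being $\pm1$. Conditioning on the other coordinates, the $\ell^1$ distance between a lazy one-dimensional walk and its shift by $1$ is at most $2\max_j\P(\text{coordinate}=j)$ by unimodality. This is bounded by $C\sqrt{d/r}$ via the binomial bound $\binom{m}{m/2}2^{-m}\leq1/\sqrt m$ together with concentration of the number $m\approx r/(2d)$ of moves. Summing $\sum_{r<R}r^{-1/2}\leq2\sqrt R$ and multiplying by $d$ gives order $d^{3/2}\sqrt R$.

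I expect the bookkeeping of the exact constant $4d^{3/2}$ to be the main obstacle. If the above is lossy, I would try an alternative first: Cauchy--Schwarz over edges, with the edge count controlled via the support of $Q^r$, combined with the Dirichlet form identity
\[
\sum_{\{x,y\}}(f(x)-f(y))^2=2d\langle f,(I-P)f\rangle,
\]
and $\langle Q^r\delta_0,(I-P)Q^r\delta_0\rangle\leq C/r\cdot Q^{2r}(0,0)$. Either route gives $C_d\sqrt R$, and the explicit constant comes from tuning the one-dimensional unimodality estimate, in which the mode probability of a lazy $\pm1$ walk with $r$ steps is at most $\sqrt{2d/r}$ roughly.
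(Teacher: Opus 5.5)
\textbf{The gap: the stated constant.} Your first two bounds are fine. The paper gets the sup bound directly from $\widehat Q(\theta)\leq\exp(-\|\theta\|_2^2/(d\pi^2))$ and Fourier inversion, rather than by thinning a cited bound for $P^m$. The oscillation bound is the same summation using $g_R\geq0$. Your reduction of the third sum to $\sum_{r<R}\sum_{i=1}^d\|Q^r(0,\cdot+e_i)-Q^r(0,\cdot)\|_1$ also matches the paper. The problem is that the lemma asserts the explicit constant $4d^{3/2}$, and your argument does not reach it.

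\begin{itemize}
\item Concentration of the number $m\sim\mathrm{Bin}(r,1/(2d))$ of moves gives only an unspecified constant times $\sqrt{d/r}$.
\item Your guessed mode bound $\sqrt{2d/r}$ would give $4\sqrt2\,d^{3/2}\sqrt R$, plus $2d$ from the term $r=0$.
\item You gloss over unimodality. Conditional on exactly $m$ moves, coordinate $i$ is a non-lazy walk on one parity class. That law is not unimodal, and its $\ell^1$ distance to its shift is $2$. So you must prove unimodality for the lazy mixture (e.g.\ by log-concavity of convolution powers) before averaging $\binom{m}{\lfloor m/2\rfloor}2^{-m}$ over $m$.
\item Your fallback fails as described. $Q^r(0,\cdot)$ is supported on order $r^d$ edges and has Dirichlet energy of order $r^{-1-d/2}$. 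Cauchy--Schwarz then gives order $r^{(d-2)/4}$ per time step, not $r^{-1/2}$.
\end{itemize}

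\textbf{The missing idea: split a $Q$-step differently.} Choose a coordinate uniformly and take a lazy $(\tfrac14,\tfrac12,\tfrac14)$ step in it. Conditional on the update counts, the coordinates are independent. Coordinate $i$, updated $N_i\sim\mathrm{Bin}(r,1/d)$ times, has law $p_N(k)=4^{-N}\binom{2N}{N+k}$. This law is symmetric and unimodal for every $N$, with no parity issue, so its $\ell^1$ distance to its shift is exactly $2p_N(0)$. Moreover $p_N(0)\leq(N+1)^{-1/2}$, by induction from $p_{N+1}(0)/p_N(0)=(2N+1)/(2N+2)$.

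Now use the exact identity $\E(N_i+1)^{-1}=\frac d{r+1}\bigl(1-(1-1/d)^{r+1}\bigr)\leq\frac d{r+1}$ together with Jensen. This gives $\|Q^r(0,\cdot+e_i)-Q^r(0,\cdot)\|_1\leq2\sqrt{d/(r+1)}$ for every $r\geq0$, including $r=0$, with no concentration estimate. Then $\sum_{r<R}(r+1)^{-1/2}\leq2\sqrt R$ yields exactly $4d^{3/2}\sqrt R$.

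The application in \eqref{eq:optimize} only uses $C\sqrt R$. Your main route does deliver that once the unimodality point is handled; it is the stated constant that needs this finer decomposition.
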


	\begin{proof}
		The Fourier multiplier of $Q$ is
		$\widehat Q(\theta)=\frac12+\frac1{2d}\sum_{i\leq d}\cos\theta_i$, which
		lies in $[0,1]$ and, by $1-\cos t\geq2t^2/\pi^2$ on $[-\pi,\pi]$, is at
		most $\exp(-\|\theta\|_2^2/(d\pi^2))$.  Fourier inversion and a Gaussian
		integral give the first bound, and summing it over $0\leq r<R$, together
		with $g_R\geq0$, gives the second.

		For the third, realize a step of $Q$ by choosing a coordinate uniformly
		and taking a lazy step in it, staying with probability $\frac12$ and
		moving either way with probability $\frac14$.  The $m$-step law of that
		one-dimensional walk is $p_m(k)=4^{-m}\binom{2m}{m+k}$, which is symmetric
		and unimodal, so $\sum_k|p_m(k+1)-p_m(k)|=2p_m(0)$.  Also
		$p_m(0)\leq(m+1)^{-1/2}$, by induction from
		$p_{m+1}(0)/p_m(0)=(2m+1)/(2m+2)$.  Conditionally on how often each
		coordinate is updated, the coordinates are independent, and the number
		$N_i$ of updates of coordinate $i$ in $r$ steps is binomial with
		parameters $r$ and $1/d$. The triangle and Cauchy--Schwarz inequalities,
		together with
		$\E(N_i+1)^{-1}=\bigl(1-(1-1/d)^{r+1}\bigr)d/(r+1)\leq d/(r+1)$,
		give
		\[
		\sum_x\bigl|Q^r(0,x+e_i)-Q^r(0,x)\bigr|
		\leq2\E(N_i+1)^{-1/2}
		\leq2\sqrt{\frac d{r+1}}\, .
		\]
		Summing over the $d$ coordinates and over $0\leq r<R$, and using
		$\sum_{r<R}(r+1)^{-1/2}\leq2\sqrt R$, gives the third bound.
	\end{proof}

	\begin{theorem}[Occupation of a site]\label{thm:return}
		Let $d\geq2$.  There is a constant $C<\infty$, depending only on $d$, such
		that for every $\beta\geq1$, every site $z$, every integer $m\geq0$ and
		every integer $h\geq1$,
		\begin{equation}\label{eq:occupation}
			\E_\beta\sum_{j=m}^{m+h-1}\one_{\{X_j=z\}}
			\leq
			\begin{cases}
				C\bigl[\log(h+2)+(\beta-1)^{2/3}h^{1/3}\bigr],&d=2,\\
				C\bigl[1+(\beta-1)^{\frac{d}{d+1}}h^{\frac{1}{d+1}}\bigr],&d\geq3,
			\end{cases}\, .
		\end{equation}
	\end{theorem}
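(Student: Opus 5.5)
The plan is to evaluate the truncated Green function of Lemma~\ref{lem:green} along the reinforced walk and to use \eqref{eq:poisson} to pick out the visits to $z$. Fix $z$, $m$, $h$ and an integer $R\geq1$ to be chosen later. Set $f(x)\coloneqq g_R(x-z)$. By symmetry of $g_R$, \eqref{eq:poisson} reads
\[
(f-Pf)(x)=2\bigl(\one_{\{x=z\}}-Q^R(0,x-z)\bigr)\, .
\]
With $\mu_j$ as in Section~\ref{sec:potential}, the weight of an edge is $1+(\beta-1)\one_{\{\text{traversed}\}}$. Hence
\[
\E_\beta\bigl[f(X_{j+1})-f(X_j)\mid\mathcal F_j\bigr]
=\frac{2d(Pf-f)(X_j)+(\beta-1)D_j}{\mu_j},
\qquad
D_j\coloneqq\sum_{y\sim X_j,\ \{X_j,y\}\in E_j}\bigl(f(y)-f(X_j)\bigr)\, .
\]
I will sum this identity over $m\leq j<m+h$ and take expectations. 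The left side telescopes to $\E_\beta[f(X_{m+h})-f(X_m)]$, which is at most $\operatorname{osc}(g_R)$ in absolute value.

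Next I rearrange. Each visit to $z$ contributes $4d(1-Q^R(0,0))/\mu_j$ to $-\sum_j2d(Pf-f)(X_j)/\mu_j$. By the first bound of Lemma~\ref{lem:green}, $1-Q^R(0,0)\geq c>0$ once $R$ exceeds a constant, and $\mu_j\leq2d\beta$, so each visit contributes at least $c/\beta$. The remaining terms are $-4dQ^R(0,X_j-z)/\mu_j$. Since $\mu_j\geq2d$, their total is at most $2hCR^{-d/2}$. The reinforcement term is the key point. For a traversed edge $\{X_j,y\}$, the factor $(\beta-1)/\mu_j$ equals $(1-\beta^{-1})$ times the conditional probability of stepping to $y$. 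Hence
\[
\E_\beta\sum_j\frac{(\beta-1)D_j}{\mu_j}
=(1-\beta^{-1})\,\E_\beta\sum_j\bigl(f(X_{j+1})-f(X_j)\bigr)\bigl(1-\eta_{j+1}\bigr)\, .
\]
The sum without the factor $1-\eta_{j+1}$ telescopes, so it is bounded by $\operatorname{osc}(g_R)$. The sum over first crossings involves each undirected edge at most once, so it is bounded by the total variation, which is at most $4d^{3/2}\sqrt R$ by Lemma~\ref{lem:green}.

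Collecting the three estimates,
\[
\beta^{-1}\E_\beta\sum_{j=m}^{m+h-1}\one_{\{X_j=z\}}\leq C\bigl(\operatorname{osc}(g_R)+hR^{-d/2}+\sqrt R\bigr)\, .
\]
Choosing $R\asymp h^{2/(d+1)}$ gives the bound $C\beta h^{1/(d+1)}$. This already yields Theorem~\ref{thm:occupation-intro}, with the logarithm absorbed when $d=2$.

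The main obstacle is the sharper $\beta$-dependence in \eqref{eq:occupation}, where the main term carries $(\beta-1)^{d/(d+1)}$ and $\beta=1$ recovers the simple-random-walk bound. The crude bound $\mu_j\leq2d\beta$ at visits to $z$ loses a full factor of $\beta$. To fix this, I would keep the $\beta$-independent potential terms, namely $\operatorname{osc}(g_R)$ and $hR^{-d/2}$, with coefficient $1$. I would charge the factor $\beta-1$ only to the total-variation term. To do this, I would not divide by $\mu_j$ but expand $\mu_j=2d+(\beta-1)(2d-\nu_j)$ directly, or equivalently pass to the continuous-time embedding with edge rates equal to the weights. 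There the generator is $2d(P-I)+(\beta-1)D$ with no normalization, and the expected holding time at a visit to $z$ is $1/\mu_j$. This would give a bound of the form $C[\operatorname{osc}(g_R)+hR^{-d/2}+(\beta-1)\sqrt R]$.

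Optimizing then gives $R\asymp(h/(\beta-1))^{2/(d+1)}$, with $R\geq1$ enforced. This produces the term $(\beta-1)^{d/(d+1)}h^{1/(d+1)}$, plus $\log(h+2)$ or $1$ coming from $\operatorname{osc}(g_R)$. The delicate point is converting back from continuous-time local time, or from the $\mu_j$-weighted visit count, to the discrete count without reintroducing $\beta$. I would first try to show that the $\mu_j$-weighted form of the identity still telescopes up to terms controlled by the same total-variation bound.
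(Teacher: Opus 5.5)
\textbf{Verdict: genuine gap.} Your first argument is correct, but what it establishes is $\E_\beta\sum_{j=m}^{m+h-1}\one_{\{X_j=z\}}\leq C\beta\bigl[\operatorname{osc}(g_R)+hR^{-d/2}+\sqrt R\bigr]$. That gives Theorem~\ref{thm:occupation-intro}, but not \eqref{eq:occupation}. What \eqref{eq:occupation} adds is exactly the $\beta$-dependence: $\operatorname{osc}(g_R)$ and $hR^{-d/2}$ carry coefficient $1$, and only the total variation carries $\beta-1$. For this you list strategies but leave the ``delicate point'' open, so as written the proposal does not prove the statement. The continuous-time embedding with rates equal to the weights would not close the gap by itself. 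The mean holding time at a visit to $z$ is $1/\mu_j$, which can be as small as $1/(2d\beta)$, so returning to the discrete count reintroduces $\beta$.

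The missing step is already inside your computation. You lose $\beta$ in two places. First, you bound the two telescoping sums separately in absolute value; these are the left side and the part $(1-\beta^{-1})\sum_j\Delta_jf$ of the reinforcement term, where $\Delta_jf\coloneqq f(X_{j+1})-f(X_j)$. Second, you use $\mu_j\geq2d$. Instead, multiply your identity by $\beta$ and move that telescoping part of the reinforcement term to the left. The two telescoping sums then combine with coefficient $\beta-(\beta-1)=1$, and you obtain
\[
\E_\beta\bigl[\Delta_jf+(\beta-1)\eta_{j+1}\Delta_jf\mid\mathcal F_j\bigr]
=-\frac{4d\beta}{\mu_j}\bigl(\one_{\{X_j=z\}}-Q^R(0,X_j-z)\bigr)\, .
\]
This is the paper's \eqref{eq:compensator}: weighting first crossings by $\beta$ gives every neighbor the same coefficient $\beta/\mu_j$.

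The second ingredient is $\mu_j\geq\beta$ for $j\geq1$, since after its first step the walk always stands at a site with a traversed incident edge. Together with $\mu_j\leq2d\beta$, the weights $4d\beta/\mu_j$ lie in $[2,4d]$ uniformly in $\beta$. Summing over $j$:
\begin{itemize}
\item visits to $z$ count with weight at least $2$;
\item the $Q^R$ terms contribute at most $4dh\|Q^R(0,\cdot)\|_\infty$;
\item the telescoped term is at most $\operatorname{osc}(g_R)$;
\item the first-crossing sum is at most $(\beta-1)4d^{3/2}\sqrt R$.
\end{itemize}
This is the bound $C[\operatorname{osc}(g_R)+hR^{-d/2}+(\beta-1)\sqrt R]$ you were aiming for. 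Treat $j=0$ separately by bounding its term by $1$, since $\mu_0=2d$ there and the weight is $2\beta$.

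The optimization also needs a case split.
\begin{itemize}
\item When $\beta-1>h$, the choice $R=1$ leaves a term $\beta-1$ that is too large, so use the trivial bound $h$.
\item For $d=2$ and $\beta-1<h^{-1/2}$, cap $R$ at $h$ so that $\log(R+1)\leq C\log(h+2)$.
\item For $d\geq3$ and $\beta=1$, let $R\to\infty$.
\end{itemize}
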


	\begin{proof}
		First suppose that $m\geq1$. Write $\Delta_jf\coloneqq f(X_{j+1})-f(X_j)$ for a bounded
		$f\colon\Z^d\to\R$. Multiplying $\Delta_jf$ by $\beta$ on first
		crossings makes the coefficient of $f(y)-f(X_j)$ in the conditional
		expectation equal to $\beta/\mu_j$ for every neighbor $y$. Thus
		\begin{equation}\label{eq:compensator}
			\E_\beta\bigl[\Delta_jf+(\beta-1)\eta_{j+1}\Delta_jf\mid\mathcal F_j\bigr]
			=\frac{2d\beta}{\mu_j}(P-I)f(X_j)\, .
		\end{equation}
		Apply \eqref{eq:compensator} to $f\coloneqq-g_R(\cdot-z)$ and let
		$\kappa_j\coloneqq4d\beta/\mu_j$.  By \eqref{eq:poisson} and telescoping,
		\[
		\E_\beta\sum_{j=m}^{m+h-1}\kappa_j\bigl(\one_{\{X_j=z\}}-Q^R(0,X_j-z)\bigr)
		=\E_\beta\Bigl[f(X_{m+h})-f(X_m)
		+(\beta-1)\sum_{j=m}^{m+h-1}\eta_{j+1}\Delta_jf\Bigr]\, .
		\]
		After its first step the walk always stands at a site with a traversed
		incident edge, so $\beta\leq\mu_j\leq2d\beta$ and hence $2\leq \kappa_j\leq4d$ for $j\geq1$.
		The steps with $\eta_{j+1}=1$ cross distinct edges, so the last sum is at
		most the total variation of $g_R$ over all edges of $\Z^d$.  Dividing by
		$2$ and applying Lemma~\ref{lem:green},
		\begin{equation}\label{eq:optimize}
			\begin{aligned}
			\E_\beta\sum_{j=m}^{m+h-1}\one_{\{X_j=z\}}
			&\leq2dh\|Q^R(0,\cdot)\|_\infty+\tfrac12\operatorname{osc}(g_R)\\
			&\qquad+\tfrac{\beta-1}2\sum_{\{x,y\}}|g_R(y)-g_R(x)|\\
			&\leq C\Bigl[hR^{-d/2}+H_d(R)+(\beta-1)\sqrt R\Bigr]\, ,
			\end{aligned}
		\end{equation}
		where $H_2(R)\coloneqq\log(R+1)$ and $H_d(R)\coloneqq1$ for $d\geq3$.  If
		$m=0$, bound the term $j=0$ by $1$ and run the argument from time $1$,
		which adds at most $1$.

		For $0<\beta-1\leq h$, choose $R=\lceil(h/(\beta-1))^{2/(d+1)}\rceil$.
		The terms $hR^{-d/2}$ and $(\beta-1)\sqrt R$ are each at most
		$C(\beta-1)^{\frac{d}{d+1}}h^{1/(d+1)}$.
		This proves \eqref{eq:occupation} for $d\geq3$. For $d=2$, it proves
		\eqref{eq:occupation} when $h^{-1/2}\leq\beta-1\leq h$, since then
		$R\leq h+1$ and $\log(R+1)\leq C\log(h+2)$.  If $d\geq3$ and $\beta=1$, let $R\to\infty$.
		If $d=2$ and $\beta-1<h^{-1/2}$, take $R=h$, which bounds $hR^{-d/2}$ and $(\beta-1)\sqrt R$
		by constants. If $\beta-1>h$, use the trivial
		bound $h$, which is at most
		$(\beta-1)^{\frac{d}{d+1}}h^{1/(d+1)}$ because
		$h^{\frac{d}{d+1}}\leq(\beta-1)^{\frac{d}{d+1}}$.

	\end{proof}

	Taking $m=0$ and $h=n$ in \eqref{eq:occupation}, and using
	$\log(n+2)\leq Cn^{1/3}$ when $d=2$, gives, for every $n\geq1$,
	\begin{equation}\label{eq:cesaro}
		\sup_{z\in\Z^d}\sum_{j=0}^{n-1}\P_\beta\{X_j=z\}
		\leq C\bigl(1+(\beta-1)^{\frac{d}{d+1}}\bigr)n^{\frac{1}{d+1}}\, .
	\end{equation}
	Since $1+(\beta-1)^{\frac{d}{d+1}}\leq2\beta$, this also gives
	\eqref{eq:occupation-intro}.

	\section{Open questions}\label{sec:questions}

\begin{problem}[A matching upper bound]\label{prob:upper}
		Fix $d\geq2$. Is there a threshold $\beta_d$ such that for every $\beta>\beta_d$ there exists $C(\beta,d)<\infty $ with  $\E_\beta|R_n|\leq
		C(\beta,d) \, n^{d/(d+1)}$ for all  $n\geq1$?  Can one take $\beta_2=1$?
	\end{problem}
	
	\begin{problem}[A limit for the rescaled range]\label{prob:limit}
    Suppose that the answer to Problem~\ref{prob:upper} is affirmative for
		$d\geq2$, and fix $\beta>\beta_d$. Does $|R_n|n^{-d/(d+1)}$ converge
		almost surely to a deterministic positive limit? Does
		$n^{-1/(d+1)}R_n$ converge almost surely, in Hausdorff distance, to a
		deterministic compact set? 
	\end{problem}

    A deterministic limit shape under spatial rescaling by $n^{-1/3}$ exists
	for the clockwise rotor walk on $\Z^2$ with independent uniform initial rotors \citep[Theorem~1.1]{BouRabeePeres2026}.
Simulations (see Figure~\ref{fig:range}) suggest that the answer to Problem~\ref{prob:limit} is positive, and the limit shape might be a ball.

	\begin{problem} \label{prob:recurrence}
		Is once-reinforced random walk on $\Z^2$ recurrent for every $\beta>1$?  
	\end{problem}
    Problem \ref{prob:recurrence} is part of the conjecture for $\Z^2$ quoted in
		\citet[Section~1]{CollevecchioTarres2025}.

	\begin{problem}[Universality]\label{prob:universality}
		Which planar walks have expected range of order $n^{2/3}$?
		When their ranges have deterministic limit shapes under spatial
		rescaling by $n^{-1/3}$, are these shapes the same up to dilation?
	\end{problem}

	Both edge and vertex Once-reinforced walks on $\Z^2$ at sufficiently
	large reinforcement are candidates for the same range exponent
	\citep[Section~I]{FosterGrassbergerPaczuski2009}; see also
	Section~\ref{ssec:related-work}.

	\bibliographystyle{plainnat}
	\bibliography{references}

\end{document}